\title[Sheaves of Commutative DG Rings]{The Derived Category of Sheaves of 
Commutative DG Rings (Preview)}
\date{15 August 2016}
\author{Amnon Yekutieli}
\address{Department of  Mathematics,
Ben Gurion University, Be'er Sheva 84105, Israel}
\email{amyekut@math.bgu.ac.il}
\newtheorem{thm}[equation]{Theorem}
\newtheorem{cor}[equation]{Corollary}
\newtheorem{prop}[equation]{Proposition}
\theoremstyle{definition}
\newtheorem{dfn}[equation]{Definition}
\newtheorem{rem}[equation]{Remark}
\numberwithin{equation}{section}
\newcommand{\opn}{\operatorname}
\newcommand{\cat}[1]{\operatorname{\mathsf{#1}}}
\newcommand{\catt}[1]{{\operatorname{\mathsf{#1}}}} 
\newcommand{\cd}{\,{\cdot}\,}
\newcommand{\rmitem}[1]{\item[\text{\textup{(#1)}}]}
\newcommand{\mcal}[1]{\mathcal{#1}}
\newcommand{\mrm}[1]{\mathrm{#1}}
\newcommand{\OO}{\mcal{O}}
\newcommand{\MM}{\mcal{M}}
\newcommand{\NN}{\mcal{N}}
\renewcommand{\AA}{\mcal{A}}
\newcommand{\BB}{\mcal{B}}
\newcommand{\EE}{\mcal{E}}
\newcommand{\CC}{\mcal{C}}
\newcommand{\Ga}{\Gamma}
\newcommand{\si}{\sigma}
\newcommand{\ep}{\epsilon}
\newcommand{\Om}{\Omega}
\newcommand{\K}{\mathbb{K}}
\newcommand{\Z}{\mathbb{Z}}
\newcommand{\tup}[1]{\textup{#1}}
\newcommand{\ot}{\otimes}
\newcommand{\til}[1]{\tilde{#1}}
\newcommand{\bra}[1]{\langle #1 \rangle}
\renewcommand{\d}{\mathrm{d}}
\newcommand{\lb}{\linebreak}
\newcommand{\sub}{\subseteq}
\begin{document}

\begin{abstract}
In this short paper we outline (mostly without proofs) our new approach to the 
derived category of sheaves of commutative DG rings. The proofs will appear in 
a subsequent paper.

Among other things, we explain how to form the derived intersection of two 
closed subschemes inside a given algebraic scheme $X$, without recourse to 
simplicial or higher homotopical methods, and without any global assumptions on 
$X$.   
\end{abstract}

\maketitle

\tableofcontents

\setcounter{section}{-1}
\section{Introduction}

In the theory of {\em DG schemes} -- a simplified variant of {\em derived 
algebraic geometry} -- it is important to have a suitable way to resolve a 
sheaf of rings by a {\em flat sheaf of DG rings}. A typical problem is this: 
$X$ is a scheme, and $Y_1, Y_2 \subseteq X$ are closed subschemes. The {\em 
derived intersection} of $Y_1$ and $Y_2$ is a DG scheme  
\[ (Y, \OO_Y) =Y_1 \times_X^{\mrm{R}} Y_2 \]
whose underlying topological space is  $Y = Y_1 \cap Y_2$, and the structure 
sheaf
\[ \OO_Y = \OO_{Y_1} \ot_{\OO_X}^{\mrm{L}} \OO_{Y_2} \]
is a suitable sheaf of commutative DG rings on this space. 
If there exist flat resolutions $\phi_i : \AA_{i} \to \OO_{Y_i}$, by 
which we mean that $\AA_{i}$ is a flat commutative DG $\OO_X$-ring, 
and $\phi_i$ is a DG ring quasi-isomorphism, then we can take 
\[ \OO_{Y}  := (\AA_{1} \ot_{\OO_X} \AA_{2})|_Y . \]
(Of course, it is enough to resolve only one of the tensor factors.) 

In case $X$ is an affine scheme, or it is quasi-projective over a nice base 
ring $\K$, then it is quite easy to produce flat quasi-coherent DG ring 
resolutions of $\OO_{Y_i}$, and in this way to construct the sheaf of DG rings 
$\OO_Y$. This was already done in the paper \cite{CK} of Ciocan-Fontanine and 
Kapranov. 

But in general (for an arbitrary scheme $X$) there does not seem to be an 
existing method to obtain flat DG ring resolutions as sheaves on $X$ itself. 
Thus the derived intersection $(Y, \OO_Y)$ has until now existed only as an 
object in a much more complicated homotopical setting. See the preprint 
\cite{Be} of Behrend for one approach, and a survey of the 
approaches of To\"en et al.\ and of Lurie under ``derived stack'' in 
\cite{nLab}. Being only an object of a complicated homotopy category, the 
derived intersection $(Y, \OO_Y)$ is usually quite difficult to manipulate  
geometrically, and to form associated structures, such a derived 
module category over $\OO_{Y}$, etc. 

The first main innovation in this paper is the use of {\em commutative 
pseudo-semi-free sheaves of DG rings}. These sheaves enable the formation of 
flat commutative DG $\OO_X$-ring resolutions in great generality. But before 
saying what these sheaves are, we must present a few background concepts. 

For an open set $U \sub X$, let us denote by 
$\OO_{U \sub X}$ the extension by zero to $X$ of the sheaf $\OO_{U}$.
Suppose $I = \coprod_{n \leq 0} I^n$
is a graded set, and for each $i \in I$ we are given an open set 
$U_i \sub X$. Define 
\[ \EE^n := \bigoplus\nolimits_{i \in I^n} \, \OO_{U_i \sub X} \]
and 
\[ \EE := \bigoplus\nolimits_{n \leq 0} \, \EE^n . \] 
We call $\EE$ the {\em pseudo-free graded $\OO_X$-module} indexed by $I$. 
The summand $\EE^n$ is in degree $n$. 
Warning: the $\OO_X$-module $\EE$ is usually not quasi-coherent! 

The commutative tensor ring of $\EE$ over 
$\OO_X$ is called a {\em commutative pseudo-free graded ring}, and we denote it 
by $\OO_X[I]$. See Section \ref{sec:pseudo-free} for details. 
It is useful to view $\OO_X[I]$ as a commutative 
pseudo-polynomial graded $\OO_X$-ring, in which the elements 
$t_i := 1\in \Ga(U_i, \OO_X)$ play the role of variables (and we call them 
pseudo-generators). For each point $x \in X$ the stalk 
$\OO_X[I]_x$ is a genuine commutative polynomial graded $\OO_{X, x}$-ring,
in variables indexed by the graded set  
$\{ i \in I \mid x \in U_i \}$.

A sheaf of commutative DG $\OO_X$-rings $\til{\AA}$ is called pseudo-semi-free 
if the graded sheaf of rings $\til{\AA}^{\natural}$, that is gotten by 
forgetting the differential on $\til{\AA}$, is a commutative pseudo-free graded 
$\OO_X$-ring. As a DG $\OO_X$-module, such $\til{\AA}$ is K-flat. 
 
Any sheaf of commutative DG $\OO_X$-rings $\AA$ admits a 
commutative pseudo-semi-free DG $\OO_X$-ring resolution
$\til{\AA} \to \AA$. This is Theorem \ref{thm:140}.
Furthermore, these resolutions are unique up to suitable 
homotopies, that we explain below. 

This brings us to the second main innovation of this paper. 
Consider the category 
$\catt{DGR}^{\leq 0}_{\mrm{sc}} / \OO_X$
of commutative DG $\OO_X$-rings. We introduce a relation that we call {\em 
relative quasi-homotopy} on the set of morphisms in this category.
See Definition \ref{dfn:233}. This is a 
congruence, and hence there is the {\em homotopy category}, in the genuine 
sense, that we denote by $\cat{K}(\catt{DGR}^{\leq 0}_{\mrm{sc}} / \OO_X)$.
Its objects are the same as those of 
$\catt{DGR}^{\leq 0}_{\mrm{sc}} / \OO_X$,
and its morphisms are the relative quasi-homotopy classes.

We can also form the abstract localization of 
$\catt{DGR}^{\leq 0}_{\mrm{sc}} / \OO_X$
with respect to the quasi-isomorphisms, and the result is the {\em derived 
category of commutative DG $\OO_X$-rings}, that we denote by 
$\cat{D}(\catt{DGR}^{\leq 0}_{\mrm{sc}} / \OO_X)$.
There is a commutative diagram of functors
\begin{equation} \label{eqn:237}
\UseTips \xymatrix @C=6ex @R=6ex {
\catt{DGR}^{\leq 0}_{\mrm{sc}} / \OO_X
\ar[d]_{\opn{P}}
\ar[dr]^{\opn{Q}}
\\
\cat{K}(\catt{DGR}^{\leq 0}_{\mrm{sc}} / \OO_X)
\ar[r]^{\bar{\opn{Q}}} 
&
\cat{D}(\catt{DGR}^{\leq 0}_{\mrm{sc}} / \OO_X)
}
\end{equation}
The functor $\bar{\opn{Q}}$ is a {\em right Ore localization} 
with respect to the set of quasi-iso\-morph\-isms, and it is also {\em 
faithful}. This gives us very tight control on the morphisms in the derived 
category. 

The commutative pseudo-semi-free DG rings have a certain lifting property
that makes everything work. See Theorem \ref{thm:141}. 
However, the commutative pseudo-semi-free DG rings have a 
built-in finiteness property (basically coming from the fact that only a finite 
intersection of open sets of $X$ is open), thus preventing them from being 
``cofibrant objects''. This seems to indicate that there is no Quillen model 
structure on 
$\catt{DGR}^{\leq 0}_{\mrm{sc}} / \OO_X$. 

We can now say how we solve the problem of derived intersection. 
For $i = 1, 2$ we view $\OO_{Y_i}$ as living in 
$\catt{DGR}^{\leq 0}_{\mrm{sc}} / \OO_X$, 
and we choose pseudo-semi-free resolutions 
$\AA_i \to \OO_{Y_i}$. Define the topological space 
$Y := Y_1 \cap Y_2 \sub X$
and the commutative DG $\OO_X$-ring 
\[  \OO_Y  := (\AA_1 \ot_{\OO_X} \AA_2)|_Y . \]
Then the derived intersection of $(Y_1, \OO_{Y_1})$ and $(Y_2, \OO_{Y_2})$
is the DG ringed space $(Y, \OO_Y)$. 
There is a canonical isomorphism
\[ \OO_Y \cong \OO_{Y_1} \ot_{\OO_X}^{\mrm{L}} \OO_{Y_2} \]
in $\cat{D}(\catt{DGR}^{\leq 0}_{\mrm{sc}} / \OO_X)$.
See Corollary \ref{cor:265} for details. Furthermore, on 
any affine open set $V \sub X$ there is a canonical isomorphism in
$\cat{D}(\catt{DGR}^{\leq 0}_{\mrm{sc}} / \OO_V)$
between $\OO_Y|_V$ and any quasi-coherent DG sheaf presentation of the derived 
intersection. This is explained in Sections \ref{sec:der-inters} and 
\ref{sec:alg-geom} of the paper.

When the scheme $X$ is quasi-projective, it is not hard to see that our 
approach is compatible with that of \cite{CK}. For more general $X$ we did not 
attempt a comparison, but it is almost certain that our approach is compatible 
with those of Behrend, To\"en and Lurie. 

It stands to reason that the method outlined in this paper should allow 
a clean construction of the {\em cotangent complex} of $X$;
see Remark \ref{rem:255}. Our method should also 
permit a geometric version of Shaul's {\em derived completion of DG 
rings} from \cite{Sh}, but we do not have a formulation of it yet. 

Presumably our work can be extended without too much difficulty to sites that 
are more general than the Zariski topology of a scheme (e.g.\ to algebraic 
spaces, and maybe even to algebraic stacks). We leave this exploration aside for 
the time being. 

In the body of the paper (before Section \ref{sec:alg-geom}) we do not work with 
schemes, but rather with {\em topological spaces} in general. Indeed, 
the natural geometric object to which our approach applies is a {\em 
commutative DG ringed space}, which is a pair $(X, \AA)$, consisting of a 
topological space $X$ and a sheaf of commutative DG rings $\AA$ on it. Moreover, 
there is no need for a special base ring, such as a field of characteristic $0$; 
our constructions are valid over $\Z$. 

The present paper is only a preview, meant to convey our new ideas on this 
subject. Only a few proofs are given here (and some of them are just partial 
proofs). Our most important result is Theorem \ref{thm:140} on the existence of 
pseudo-semi-free resolutions, and for that we provide a sketch of a proof (the 
beginning of a full proof, and an indication how to complete it). Understanding 
the geometric principle of the proof of Theorem \ref{thm:140}, and 
combining it with the proofs of several algebraic results in \cite{Ye2}, 
should presumably allow experts to write their own proofs of the rest of the 
theorems in this paper. At any rate, we intend to publish a complete account of 
our approach in the future. Until then, we welcome feedback from readers, with 
suggestions of proofs, of better results, and also of counterexamples and 
refutations, in case there should be any...

\medskip \noindent
{\bf Acknowledgments.} 
I wish to thank Liran Shaul, Rishy Vyas, Vladimir Hinich and Donald Stanley for 
discussions.

\section{Pseudo-Free Sheaves of Commutative Graded Rings}
\label{sec:pseudo-free}

Let us fix a nonzero commutative base ring $\K$. For instance, $\K$ could be 
a field of characteristic $0$; or it could be the ring of integers $\Z$. 
Let $X$ be a topological space. The constant sheaf on $X$ with values in $\K$ 
is $\K_X$. The category of $\K_X$-modules (i.e.\ sheaves of $\K$-modules on 
$X$) is $\cat{M}(\K_X) = \cat{Mod} \K_X$.

\begin{dfn} \label{dfn:240}
A {\em sheaf of commutative graded $\K_X$-rings} is a sheaf of graded 
rings $\AA = \bigoplus_{m \leq 0} \AA^m$, together with a homomorphism 
$\K_X \to \AA^0$, that $\AA$ has the strong commutativity 
property: any local sections $a \in \AA^m$ and $b \in \AA^n$ satisfy 
$b \cd a = (-1)^{m n} \cd a \cd b$, and $a \cd a = 0$ if $m$ is odd.
\end{dfn}

The category of sheaves of commutative graded $\K_X$-rings is denoted by \lb
$\catt{GR}^{\leq 0}_{\mrm{sc}} / \K_X$. 

Suppose $U \sub X$ is an open set, with inclusion morphism
$g : U \to X$. For any $\K_U$-module $\NN$ its extension by zero to $X$ is the 
$\K_X$-module $g_!(\NN)$; and for any $\K_X$-module $\MM$ its restriction  
to $U$ is the $\K_U$-module $g^{-1}(\MM) = \MM|_U$. These operations are 
adjoint: there is a canonical isomorphism 
\begin{equation} \label{eqn:221}
\opn{Hom}_{\K_X} \bigl( g_!(\NN), \MM \bigr) \cong 
\opn{Hom}_{\K_U} \bigl( \NN, g^{-1}(\MM) \bigr)
\end{equation}
in $\cat{M}(\K)$. See \cite[Section II.1]{Ha}. 

\begin{dfn} \label{dfn:220}
Let $U \sub X$ be an open set, with inclusion morphism $g : U \to X$.
The {\em pseudo-free $\K_X$-module of pseudo-rank $1$ and pseudo-support $U$} 
is the $\K_X$-module  
\[ \K_{U \sub X} := g_!(\K_U) . \]
The element 
\[ t_U := 1 \in \Ga(U, \K_{U \sub X}) \cong \Ga(U, \K_X) \]
is called the {\em pseudo-free generator} of the $\K_X$-module $\K_{U \sub X}$.
\end{dfn}

Taking $\NN = \K_{U}$ in formula  (\ref{eqn:221}), 
we have canonical isomorphisms 
\begin{equation} \label{eqn:223}
\opn{Hom}_{\K_X} (\K_{U \sub X}, \MM) \cong 
\opn{Hom}_{\K_U} \bigl( \K_{U},  g^{-1}(\MM) \bigr) \cong \Ga(U, \MM) 
\end{equation}
in $\cat{M}(\K)$. The pseudo-free generator $t_U$ can be interpreted  
as a homomorphism 
$t_U : \K_{U \sub X} \to \K_X$ in $\cat{M}(\K_X)$.
This is actually an injective homomorphism, and thus we can view 
$\K_{U \sub X}$ as an {\em ideal sheaf} in $\K_X$. It is the ideal sheaf 
``pseudo-generated'' by $t_U$.

For a point $x \in X$ we denote by $\K_{U \sub X, x}$ the stalk 
of the sheaf $\K_{U \sub X}$ at $x$. 
If $x \in U$, then the stalk $\K_{U \sub X, x}$ is a free $\K$-module of rank 
$1$ with basis $t_U$. But if $x \notin U$ then $\K_{U \sub X, x} = 0$. 
If $U' \sub X$ is another open set, then 
\[ \K_{U \sub X} \ot_{\K_X} \K_{U' \sub X} \cong \K_{U \cap U' \sub X}  \]
canonically as $\K_X$-modules. The pseudo-free generators multiply:
\[ t_U \ot t_{U'} \mapsto t_{U \cap U'} . \]

The pseudo-free sheaves were used to great effect by Grothendieck in 
\cite[Section II.7]{RD}. See also our paper \cite[Section 3]{Ye1}, where the 
name ``pseudo-free'' was first used.  

\begin{dfn} \label{dfn:210}
A {\em generator specification} on $X$ is a triple 
\[ \bigl( I, \{ U_i \}_{i \in I}, \{ n_i \}_{i \in I} \bigr) \]
consisting of a set $I$, a collection $\{ U_i \}_{i \in I}$ of open sets of 
$X$, and a collection $\{ n_i \}_{i \in I}$ of nonpositive integers $n_i$. 
\end{dfn}

We often refer to the generator specification just as $I$, leaving the rest of 
the ingredients implicit. 
The numbers $n_i$ are called the cohomological degrees. 
For every index $i$ there is the the pseudo-free generator 
\begin{equation} \label{eqn:235}
t_i := t_{U_i} \in \Ga(U_i, \K_{U_i \sub X}) .
\end{equation}

Suppose a generator specification $I$ is given. For any $n$ let 
\begin{equation} \label{eqn:241}
I^n := \{ i \in I \mid n_i = n \} .
\end{equation}
Thus $I = \coprod_n I_n$, so it is a graded set. For any $x \in X$ we let
\begin{equation} \label{eqn:243}
I_x := \{ i \in I \mid x \in U_i \} .
\end{equation}

\begin{dfn} \label{dfn:241}
Given a generator specification $I$, the {\em graded pseudo-free  
$\K_X$-mod\-ule pseudo-generated by $I$} is 
\[ \EE := \bigoplus_{n \leq 0} \, \EE^n , \]
where for every $n$ the graded component of cohomological degree $n$ is  
\[ \EE^n := \bigoplus_{i \in I^n} \, \K_{U_i \sub X} . \]
\end{dfn}

Note that for any point $x \in X$ the stalk $\EE_x$ is a graded free
$\K$-module, with basis indexed by the graded set $I_x$.  

\begin{dfn} \label{dfn:242}
The {\em noncommutative pseudo-free graded $\K_X$-ring} pseudo-ge\-ne\-rated by 
 $I$ is \[ \K_X \bra{I} := \bigoplus_{l \geq 0} \, 
\EE \ot_{\K_X} \cdots \ot_{\K_X} \EE , \]
where in the $l$-th summand there are $l$ tensor factors.
The multiplication is the tensor product. 
\end{dfn}

Note that $\K_X \bra{I}$ is actually bigraded: it has the tensor 
grading $l$ and the cohomological grading $n$; but we are only interested in 
the cohomological grading. 

\begin{dfn} \label{dfn:211}
Let $I$ be a generator specification. 
The {\em commutative pseudo-free graded $\K_X$-ring} pseudo-generated by $I$ 
is the quotient $\K_X[I]$ of the $\K_X$-ring $\K_X \bra{I}$, modulo the 
two-sided ideal sheaf pseudo-generated by the local sections 
\[ t_{i} \cd t_{j} - (-1)^{n_i \cd n_j} \cd t_{j} \cd t_{i} \]
for all $i, j \in I$, and by the local sections
$t_{i} \cd t_{i}$ for all $i$ such that $n_i$ is odd. 
\end{dfn}

The homogeneous component of $\K_X[I]$ of cohomological degree $n$ is denoted 
by $\K_X[I]^n$. Thus
\begin{equation} \label{eqn:244}
\K_X[I] = \bigoplus_{n \leq 0} \, \K_X[I]^n . 
\end{equation}

\begin{prop} \label{prop:240}
For any point $x \in X$ there is a canonical graded $\K$-ring isomorphism 
\[ \K_X[I]_x \cong \K[ I_x] , \]
where $\K[ I_x]$ is the commutative graded polynomial ring on the 
collection of graded variables indexed by the graded set $I_x$. 
\end{prop}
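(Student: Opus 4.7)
The plan is to show that the stalk functor at $x$ converts the three-step construction of $\K_X[I]$ (graded pseudo-free module, noncommutative tensor ring, commutative quotient) into the analogous three-step construction of $\K[I_x]$ over $\K$. The core input is the stalk of a single pseudo-free module: because $g_!$ is extension by zero, we have $\K_{U \sub X, x} = \K \cd t_U$ if $x \in U$ and $\K_{U \sub X, x} = 0$ otherwise, as already recorded after Definition \ref{dfn:220}.

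First, I would take the stalk at $x$ in Definition \ref{dfn:241}. Since the stalk functor is exact and commutes with arbitrary direct sums, only the summands indexed by $i$ with $x \in U_i$ survive, giving a graded $\K$-module isomorphism $\EE_x \cong \bigoplus_{i \in I_x} \K \cd t_i$, so $\EE_x$ is graded free on the graded set $I_x$. Second, since $(\K_X)_x = \K$ and the stalk functor commutes with $\ot_{\K_X}$, taking stalks in Definition \ref{dfn:242} converts each iterated tensor product $\EE \ot_{\K_X} \cdots \ot_{\K_X} \EE$ into $\EE_x \ot_\K \cdots \ot_\K \EE_x$, producing a graded $\K$-ring isomorphism $\K_X \bra{I}_x \cong \K \bra{I_x}$ with the noncommutative polynomial $\K$-ring on the graded set $I_x$.

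Third, I would take stalks in Definition \ref{dfn:211}. The stalk functor is exact and sends the two-sided ideal sheaf pseudo-generated by a family of local sections to the two-sided ideal of the stalk generated by their germs at $x$. The germ at $x$ of $t_i \cd t_j - (-1)^{n_i \cd n_j} \cd t_j \cd t_i$ vanishes whenever $i \notin I_x$ or $j \notin I_x$ (since then $t_{i,x} = 0$ or $t_{j,x} = 0$), and for $i, j \in I_x$ it becomes the standard graded-commutativity relation; similarly for the square relations $t_i \cd t_i$ when $n_i$ is odd. Consequently the stalk of the quotient is $\K \bra{I_x}$ modulo exactly these relations, which by definition is $\K[I_x]$.

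The one point requiring care is the behaviour of the \emph{ideal sheaf pseudo-generated by local sections} under stalks, since these sections are only defined on open subsets of $X$ rather than globally. I would handle this by unwinding pseudo-generation as the image of a morphism out of a direct sum of pseudo-free modules of the form $\K_{U_i \cap U_j \sub X}$, one per relation, each carrying the corresponding local section as its pseudo-free generator; exactness of stalks together with its commutation with direct sums then yields the asserted description of the ideal at $x$, and canonicity of the isomorphism $\K_X[I]_x \cong \K[I_x]$ is then immediate from the compatibility of the generators $t_i$ with the variables of $\K[I_x]$.
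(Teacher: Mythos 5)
The paper states Proposition \ref{prop:240} without proof (it is one of the results left unproved in this preview), and your argument is correct and is plainly the intended one: since the stalk functor at $x$ commutes with extension by zero, with arbitrary direct sums, and with $\ot_{\K_X}$ (as $(\K_X)_x = \K$), and is exact, it converts the three-step construction $\EE \rightsquigarrow \K_X \bra{I} \rightsquigarrow \K_X[I]$ of Definitions \ref{dfn:241}, \ref{dfn:242} and \ref{dfn:211} into the corresponding construction of $\K[I_x]$, with only the indices $i \in I_x$ surviving. Your final paragraph addresses the one genuinely delicate point --- that the stalk of an ideal sheaf pseudo-generated by local sections is the two-sided ideal generated by their germs, which you justify by writing the ideal as the image of a map out of a direct sum of modules of the form $\K_{U_i \cap U_j \sub X}$ (tensored with the ring on both sides) and invoking exactness of stalks --- and this is handled correctly, so the proof is complete.
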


\begin{cor} \label{cor:240}
For each $n$ the sheaf $\K_X[I]^n$ is flat over $\K_X$.
\end{cor}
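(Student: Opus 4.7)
The plan is to verify flatness stalk-wise. Since $\K_X$ is a constant sheaf, its stalk at every $x \in X$ is $\K$, and tensor products of sheaves of $\K_X$-modules commute with stalks while exactness may be tested on stalks. Hence a $\K_X$-module $\MM$ is flat over $\K_X$ if and only if each stalk $\MM_x$ is flat as a $\K$-module. So it suffices to show $(\K_X[I]^n)_x$ is $\K$-flat for every $x \in X$.

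By Proposition \ref{prop:240}, the stalk $\K_X[I]_x$ is isomorphic, as a graded $\K$-ring, to the commutative graded polynomial ring $\K[I_x]$ on the graded set $I_x$. This isomorphism respects the cohomological grading, so $(\K_X[I]^n)_x \cong \K[I_x]^n$, and I reduce to proving that $\K[I_x]^n$ is a free $\K$-module.

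This last step is classical: because of the strong (super) commutativity imposed in Definition \ref{dfn:211}, the ring $\K[I_x]$ is the graded symmetric algebra on the free graded $\K$-module with basis $I_x$, so it admits a canonical $\K$-basis of admissible monomials, namely products of generators of even cohomological degree (with arbitrary nonnegative multiplicities) times products of pairwise distinct generators of odd degree. Selecting those admissible monomials whose total cohomological degree equals $n$ gives a free $\K$-basis of $\K[I_x]^n$, and a free module is flat.

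The only point that requires care is the freeness assertion in the last step, and even that is not really an obstacle: it is standard, but one does need to know that the graded-commutativity relations together with the odd-square-zero relations produce no further collapse among admissible monomials. This is visible, for instance, from the standard PBW-type normal form argument applied to the graded tensor algebra $\K\bra{I_x}$ modulo the relations of Definition \ref{dfn:211}.
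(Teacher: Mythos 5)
Your proof is correct and follows exactly the route the paper intends: the corollary is stated as an immediate consequence of Proposition \ref{prop:240}, namely that flatness of $\K_X[I]^n$ over $\K_X$ is checked on stalks, where $(\K_X[I]^n)_x \cong \K[I_x]^n$ is a free $\K$-module on the admissible monomials of degree $n$ (even generators with multiplicities, odd generators square-free). Your added care about the PBW-type normal form is a reasonable elaboration of the same argument, not a departure from it.
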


\begin{prop} \label{prop:235}
Let $I$ be a generator specification, let 
$\AA \in \catt{GR}^{\leq 0}_{\mrm{sc}} / \K_X$, and for every $i \in I$
let $a_i \in \Ga(U_i, \AA^{n_i})$. Then there is a unique homomorphism 
$\phi : \K_X[I] \to \AA$ in 
$\catt{GR}^{\leq 0}_{\mrm{sc}} / \K_X$
such that $\phi(t_i) = a_i$. 
\end{prop}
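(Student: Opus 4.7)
The plan is to follow the standard three-step construction of a universal map out of a free/polynomial object, adapted to the pseudo-free sheaf setting, with the adjunction (\ref{eqn:221}) doing the sheaf-theoretic work.

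First, I would use the adjunction isomorphism (\ref{eqn:223}) to turn each local section $a_i \in \Gamma(U_i, \AA^{n_i})$ into a morphism $\phi_i : \K_{U_i \sub X} \to \AA^{n_i}$ in $\cat{M}(\K_X)$ sending the pseudo-free generator $t_i$ to $a_i$. Summing over $i \in I$ and decomposing by cohomological degree yields a morphism $\phi_{\EE} : \EE \to \AA$ of graded $\K_X$-modules, where $\EE$ is the graded pseudo-free module of Definition \ref{dfn:241}.

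Second, I would extend $\phi_{\EE}$ to the noncommutative pseudo-free ring $\K_X \bra{I}$. On the $l$-th tensor summand $\EE^{\ot l}$, define the map as the composition of $\phi_{\EE}^{\ot l}$ with the iterated multiplication $\AA^{\ot l} \to \AA$. Summing over $l \geq 0$ gives a morphism $\phi_{\bra{}} : \K_X \bra{I} \to \AA$ in $\catt{GR}^{\leq 0}_{\mrm{sc}} / \K_X$ (well, of graded $\K_X$-rings); by construction it is multiplicative and $\K_X$-linear. Third, I would check that $\phi_{\bra{}}$ kills the two-sided ideal of Definition \ref{dfn:211}. On the local sections $t_i \cd t_j - (-1)^{n_i \cd n_j} \cd t_j \cd t_i$ (resp.\ $t_i \cd t_i$ with $n_i$ odd), the image is $a_i \cd a_j - (-1)^{n_i \cd n_j} \cd a_j \cd a_i$ (resp.\ $a_i \cd a_i$), which vanishes because $\AA$ is strongly commutative. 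Since an ideal sheaf is pseudo-generated by a family of local sections iff its stalks are generated, and the stalks of $\AA$ are themselves strongly commutative, the whole ideal is annihilated. Thus $\phi_{\bra{}}$ factors through the quotient as the required $\phi : \K_X[I] \to \AA$.

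For uniqueness, suppose $\phi, \phi'$ both satisfy $\phi(t_i) = \phi'(t_i) = a_i$. For each $x \in X$, Proposition \ref{prop:240} identifies $\K_X[I]_x$ with the commutative polynomial $\K$-algebra on the graded set $I_x$, which is generated as a $\K$-algebra by the stalks at $x$ of the $t_i$ with $i \in I_x$. Hence $\phi_x$ and $\phi'_x$ agree on stalks, so $\phi = \phi'$. The only genuine subtlety I foresee is bookkeeping in step three: one must verify that the sheafified commutativity relations really are generated (in the pseudo sense of Section \ref{sec:pseudo-free}) by the listed local sections on their respective open sets $U_i \cap U_j$, so that strong commutativity in $\AA$ on the same opens suffices. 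This is the main obstacle, but it is routine once one unwinds the stalkwise description of pseudo-generation.
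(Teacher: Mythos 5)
Your proposal is correct and follows essentially the same route as the paper's proof: use the adjunction (\ref{eqn:223}) to define $\phi$ on the pseudo-free module $\EE$, extend to $\K_X \bra{I}$ by the universal property of the tensor product, and observe that strong commutativity of $\AA$ kills the ideal of graded commutators, inducing $\phi : \K_X[I] \to \AA$. Your stalkwise uniqueness argument via Proposition \ref{prop:240} simply fills in the detail the paper dismisses with ``the uniqueness is clear.''
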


\begin{proof}
Let $\EE$ be the pseudo-free $\K_X$-module pseudo-generated by $I$. 
The adjunction property (\ref{eqn:223})
says that there is a unique homomorphism 
$\phi : \EE \to \AA$ in $\cat{M}(\K_X)$ such that 
$\phi(t_i) = a_i$ on $U_i$. This extends uniquely to a homomorphism of 
$\K_X$-rings $\phi : \K_X \bra{I} \to \AA$
by the universal property of the tensor product. To be explicit, any 
pseudo-monomial 
\[ t_{i_1} \ot \cdots \ot t_{i_l} \in
\Ga \bigl( U_{i_1} \cap \cdots \cap U_{i_l}, \K_X \bra{I} \bigr) \]
goes to the element 
\[ a_{i_1}  \cdots a_{i_l} \in
\Ga \bigl( U_{i_1} \cap \cdots \cap U_{i_l}, \AA \bigr) . \]
Because $\AA$ is commutative, the two-sided ideal of graded commutators  
goes to zero, and therefore there is an induced homomorphism 
$\phi : \K_X[I] \to \AA$. The uniqueness is clear. 
\end{proof}

\begin{prop} \label{prop:236}
Let $I$ be a generator specification, let 
$\AA \in  \catt{GR}^{\leq 0}_{\mrm{sc}} / \K_X$, 
and define $\BB := \AA \ot_{\K_X} \K_X[I]$. 
Suppose for every $i \in I$ we are given an element
 $b_i \in \Ga(U_i, \BB^{n_i + 1})$. 
\begin{enumerate}
\item There is a unique derivation 
$\d : \BB \to \BB$ of degree $+1$ 
that extends the differential of $\AA$ and such that $\d(t_i) = b_i$. 

\item If $\d(b_i) = 0$ for all $i$, then $\d \circ \d = 0$. 
\end{enumerate}
\end{prop}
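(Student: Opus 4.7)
My plan is to obtain part (1) by noting that uniqueness is a formal consequence of the Leibniz rule, and by constructing $\d$ via a dual-numbers thickening that lets me invoke Proposition \ref{prop:235}. Part (2) will then follow by checking $\d \circ \d$ on the generators $\AA$ and $\{ t_i \}$.

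For uniqueness in (1): a $\K_X$-linear graded derivation on $\BB$ of degree $+1$ is determined by the Leibniz rule once its values on a local ring-generating set are fixed. On any open $V \sub X$, the ring $\BB|_V$ is generated over $\K_X|_V$ by $\AA|_V$ together with the restrictions of those $t_i$ with $V \sub U_i$, and on these the value of $\d$ is prescribed. For existence, I would introduce the graded-commutative $\K_X$-algebra of dual numbers $\wtil{\BB} := \BB \oplus \BB \cd \ep$, where $\ep$ is a formal symbol of cohomological degree $+1$ (graded-commutativity then forces $\ep \cd \ep = 0$). The Leibniz rule for $d_\AA$ is precisely the statement that $a \mapsto a + d_\AA(a) \cd \ep$ is a homomorphism $\AA \to \wtil{\BB}$ of graded-commutative $\K_X$-rings. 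A mild extension of Proposition \ref{prop:235} to graded-commutative targets without a nonpositivity constraint produces a unique homomorphism $\K_X[I] \to \wtil{\BB}$ sending each $t_i$ to $t_i + b_i \cd \ep$, and combining these via the universal property of the tensor product yields a $\K_X$-algebra homomorphism $\phi : \BB \to \wtil{\BB}$ that reduces modulo $\ep$ to the identity. Writing $\phi(b) = b + \d(b) \cd \ep$, the fact that $\phi$ is a ring homomorphism translates into the graded Leibniz rule for $\d$, while the prescribed values hold by construction.

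For part (2), a routine double application of the Leibniz rule shows that $\d \circ \d : \BB \to \BB$ is again a $\K_X$-linear graded derivation, now of degree $+2$. By the same generation principle used for uniqueness, it suffices to verify vanishing on the generators: on $\AA$ we have $d_\AA \circ d_\AA = 0$, and on each $t_i$ we have $\d(b_i) = 0$ by hypothesis, so $\d \circ \d = 0$ on a local ring-generating set, hence globally. The main obstacle is the positive-degree excursion required by the dual-numbers construction, since Proposition \ref{prop:235} as stated restricts to nonpositively graded targets. This is easily bypassed: its proof carries over verbatim to graded-commutative $\K_X$-rings with components in arbitrary degrees. Alternatively, one can construct $\d$ stalk-by-stalk using the classical universal property of a polynomial ring over a DG base, and then invoke sheaf-theoretic uniqueness to verify that the family of stalk derivations comes from a single morphism of sheaves.
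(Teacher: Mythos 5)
Your overall architecture --- uniqueness by generation, existence via a square-zero extension, part (2) by observing that $\d \circ \d$ is an even derivation vanishing on generators --- is sound, but the central construction contains a concrete error: the degree of $\ep$. For $\phi(b) = b + \d(b) \cd \ep$ to be a homomorphism of \emph{graded} rings it must preserve cohomological degree, and since $\d$ raises degree by $1$, this forces $\abs{\ep} = -1$, not $+1$ as you assert; with $\abs{\ep} = +1$ the element $b + \d(b) \cd \ep$ is not even homogeneous (the $\ep$-component sits in degree $\abs{b} + 2$), so the claimed equivalence between the Leibniz rule and multiplicativity of $\phi$ fails before any signs are checked, and your proposed repair (extending Proposition \ref{prop:235} to unbounded gradings) does not address this. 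Once $\ep$ is placed in degree $-1$ --- still odd, so strong commutativity forces $\ep \cd \ep = 0$ --- your ``main obstacle'' evaporates: $\wtil{\BB} := \BB \oplus \BB \cd \ep$ is concentrated in nonpositive degrees, is an object of $\catt{GR}^{\leq 0}_{\mrm{sc}} / \K_X$, and Proposition \ref{prop:235} applies verbatim with $a_i := t_i + b_i \cd \ep \in \Ga(U_i, \wtil{\BB}^{n_i})$, since $b_i \in \Ga(U_i, \BB^{n_i + 1})$. Two smaller points: you should write $\ep$ on the left, $\phi(b) = b + \ep \cd \d(b)$, to recover the standard Leibniz rule $\d(a \cd b) = \d(a) \cd b + (-1)^{\abs{a}} \, a \cd \d(b)$ (your right-handed formula encodes a variant twisted by $(-1)^{\abs{b}}$); and your uniqueness phrase ``generated by $\AA|_V$ and those $t_i$ with $V \sub U_i$'' is imprecise, because a section of $\BB$ over $V$ is only locally a sum of pseudo-monomials defined on intersections $V \cap U_{i_1} \cap \cdots \cap U_{i_l}$ --- but your stalkwise fallback, where Proposition \ref{prop:240} exhibits $\BB_x$ as a genuine polynomial $\AA_x$-ring on the graded set $I_x$, makes this airtight.

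For comparison, the paper disposes of this proposition in one line: it is ``like the previous proof'' --- i.e., one uses the adjunction (\ref{eqn:223}) to define a degree $+1$ map $\EE \to \BB$ with $t_i \mapsto b_i$, extends it to pseudo-monomials by the Leibniz formula, and checks compatibility with the graded-commutator ideal --- ``combined with \cite[Lemma 3.20]{Ye2}'', which supplies the algebraic verification on stalks. Your square-zero-extension route instead derives existence formally from the universal property of Proposition \ref{prop:235}, trading the hands-on well-definedness check for a categorical argument; after the grading of $\ep$ is corrected this is a legitimate, and arguably cleaner, alternative mechanism, and your treatment of uniqueness and of part (2) coincides with what the cited algebraic lemma does.
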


\begin{proof}
Like the previous proof, combined with \cite[Lemma 3.20]{Ye2}. 
\end{proof}

\section{Pseudo-Semi-Free Sheaves of Commutative DG Rings}

Again $X$ is a topological space. 

\begin{dfn} \label{dfn:245}
A {\em commutative DG $\K_X$-ring} is a 
sheaf of commutative graded $\K_X$-rings $\AA = \bigoplus_{i \leq 0} \AA^i$, as 
in Definition \ref{dfn:240}, with a $\K_X$-linear differential $\d$ of degree 
$+1$ that satisfies the graded Leibniz rule.

A homomorphism of DG $\K_X$-rings $\phi : \AA \to \BB$
is a homomorphism of sheaves that respects the DG $\K_X$-ring structure.
The category of commutative DG $\K_X$-rings is denoted by 
$\catt{DGR}^{\leq 0}_{\mrm{sc}} / \K_X$. 
\end{dfn}

In more conventional language, a commutative DG $\K_X$-ring $\AA$ would be 
called a sheaf of unital associative commutative nonpositive cochain 
differential graded $\K$-algebras on $X$. 

Commutative rings are viewed as DG rings concentrated in degree $0$.

\begin{dfn} \label{dfn:250}
A {\em commutative DG ringed space} over $\K$ is a pair $(X, \AA)$, 
where $X$ is a topological space, and $\AA$ commutative DG $\K_X$-ring.
\end{dfn}

\begin{dfn} \label{dfn:246}
Let $(X, \AA)$ be a commutative DG ringed space over $\K$.
A {\em commutative DG $\AA$-ring} is a pair $(\BB, \phi)$, where 
$\BB \in \catt{DGR}^{\leq 0}_{\mrm{sc}} / \K_X$,
and $\phi : \AA \to \BB$ is a homomorphism in 
$\catt{DGR}^{\leq 0}_{\mrm{sc}} / \K_X$. The morphisms between 
commutative DG $\AA$-rings are the obvious ones. 
The resulting category is denoted by
$\catt{DGR}^{\leq 0}_{\mrm{sc}} / \AA$.
\end{dfn}

To any $\AA \in \catt{DGR}^{\leq 0}_{\mrm{sc}} / \K_X$
we can assign its cohomology 
$\opn{H}(\AA)$, which is a graded $\K_X$-ring. Note that 
$\opn{H}(\AA)$ is the sheaf associated to the presheaf 
$U \mapsto \opn{H}(\Ga(U, \AA))$.
Cohomology is a functor 
\[ \opn{H} : \catt{DGR}^{\leq 0}_{\mrm{sc}} / \K_X \to 
\catt{GR}^{\leq 0}_{\mrm{sc}} / \K_X . \]
A homomorphism $\phi : \AA \to \BB$ is called a {\em quasi-isomorphism}
if $\opn{H}(\phi)$ is an isomorphism. 

Let $\AA$ be a commutative DG $\K_X$-ring. We denote by $\AA^{\natural}$ the 
graded $\K_X$-ring gotten by forgetting the differentials.
The commutative pseudo-free graded $\K_X$-ring pseudo-generated by a generator 
specification $I$ was introduced in Definition \ref{dfn:211}.

\begin{dfn}
Let $\phi : \AA \to \BB$ be a homomorphism
in $\catt{DGR}^{\leq 0}_{\mrm{sc}} / \K_X$.
We say that $\phi$ is a {\em pseudo-semi-free DG ring homomorphism}
in $\catt{DGR}^{\leq 0}_{\mrm{sc}} / \K_X$, and that 
$\BB$ is a {\em commutative pseudo-semi-free DG $\AA$-ring}, 
if there is an isomorphism 
\[ \BB^{\natural} \cong \AA^{\natural} \ot_{\K_X} \K_X[I] \]
of graded $\AA^{\natural}$-rings, for some generator 
specification $I$.
\end{dfn}

\begin{prop}  \label{prop:180}
Let $\BB$ be a pseudo-semi-free commutative DG $\AA$-ring. Then $\BB$
is K-flat as a DG $\AA$-module. 
\end{prop}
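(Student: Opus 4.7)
My plan is to reduce the statement to a classical fact about semi-free commutative DG algebras by working stalk by stalk.

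First, K-flatness can be checked stalkwise: for any DG $\AA$-module $\NN$, the cohomology sheaf $\opn{H}(\NN)$ is the sheaf associated to the presheaf $U \mapsto \opn{H}(\Ga(U, \NN))$, so $\NN$ is acyclic iff each stalk $\NN_x$ is acyclic as a DG $\AA_x$-module; and $(\BB \ot_{\AA} \MM)_x \cong \BB_x \ot_{\AA_x} \MM_x$, because tensor products of sheaves of modules commute with stalks. Hence it suffices to prove that every stalk $\BB_x$ is K-flat as a DG $\AA_x$-module. At each such $x$, the pseudo-semi-free hypothesis together with Proposition \ref{prop:240} gives a graded $\AA_x^{\natural}$-ring isomorphism $\BB_x^{\natural} \cong \AA_x^{\natural} \ot_{\K} \K[I_x]$, so $\AA_x \to \BB_x$ is a genuine commutative semi-free extension of nonpositive DG $\K$-algebras.

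Next, I would invoke the classical fact that such extensions are K-flat, which I would cite from \cite{Ye2} (in the spirit of the Lemma 3.20 used in Proposition \ref{prop:236}). The proof goes via the filtration $F_k := \AA_x[\{t_i \in I_x : n_i \geq -k\}]$ for $k \geq 0$. The key observation is that, because $\BB_x$ is concentrated in nonpositive degrees, every monomial in $\BB_x^{-k+1}$ can involve only generators of degree strictly greater than $-k$: if some factor had degree $\leq -k$, the remaining (nonpositive) factors would have to sum to at least $1$, which is impossible. Hence the differential of each new generator of degree $-k$ lies in $F_{k-1}$, so each $F_k$ is a DG sub-$\AA_x$-ring with $\d(F_k) \sub F_{k-1}$, and $\BB_x = \bigcup_k F_k$. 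The successive extension $F_{k-1} \to F_k$ adjoins graded variables in a single cohomological degree with differentials in $F_{k-1}$; a secondary filtration by polynomial degree in those new variables, whose subquotients are free $F_{k-1}$-modules with zero induced differential, shows this extension is K-flat. Since K-flatness is transitive and preserved under filtered colimits of DG sub-modules, $\BB_x$ is K-flat over $\AA_x$.

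The substantive content of this plan is the observation that the nonpositive cohomological grading automatically makes the filtration by cohomological degree of the generators compatible with $\d$, without requiring any separate filtration hypothesis on the pseudo-semi-free extension; this is what makes the bare graded-ring isomorphism in the definition of pseudo-semi-free sufficient to yield K-flatness. Beyond this, the argument is essentially bookkeeping that combines Proposition \ref{prop:240} with the classical result of \cite{Ye2}, so I do not anticipate any serious obstacle.
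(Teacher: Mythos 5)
The paper states Proposition \ref{prop:180} without any proof (it is a preview paper), so there is no official argument to compare against; your proposal is correct and is exactly the geometrization the surrounding text points to: reduce to stalks (using that stalks are exact, detect acyclicity, and commute with $\ot$ --- and note you only need the easy direction, stalkwise K-flatness implying K-flatness of the sheaf), identify $\BB_x$ via Proposition \ref{prop:240} as a genuine semi-free extension $\AA_x^{\natural} \ot_{\K} \K[I_x]$, and run the filtration argument from \cite{Ye2}. Your key observation is also the right one to single out: since everything sits in nonpositive degrees, a monomial of degree $-k+1$ cannot involve a generator of degree $\leq -k$, so $\d(t_i) \in F_{k-1}$ for every degree $-k$ generator, and the semi-free filtration is automatic from the bare graded-ring isomorphism in the definition of pseudo-semi-free --- this is what lets the definition omit any filtration hypothesis. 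Two small repairs, neither a gap: the subquotients of your secondary filtration by polynomial degree do not have zero differential; rather, the differential induced on the basis monomials vanishes modulo lower polynomial degree, so each subquotient is a direct sum of degree shifts of $F_{k-1}$ as a DG $F_{k-1}$-module, which is still evidently K-flat. And in the final colimit step you do not need the inclusions $F_k \ot_{\AA_x} \MM_x \to F_{k+1} \ot_{\AA_x} \MM_x$ to be injective at all: cohomology commutes with filtered colimits, so acyclicity of each term already gives acyclicity of $\BB_x \ot_{\AA_x} \MM_x$.
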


\begin{dfn} \label{dfn:181}
Let $(X, \AA)$ be a commutative DG ringed space, and let $\BB$ be a commutative 
DG $\AA$-ring. A {\em pseudo-semi-free commutative DG ring resolution} of $\BB$ 
over $\AA$, or a {\em pseudo-semi-free resolution of $\BB$ in 
$\catt{DGR}^{\leq 0}_{\mrm{sc}} / \AA$}, 
is a pair $(\til{\BB}, \psi)$, where $\til{\BB}$ is a pseudo-semi-free 
commutative DG $\AA$-ring, and $\psi : \til{\BB} \to \BB$ is a surjective 
quasi-isomorphism of DG $\AA$-rings. 
\end{dfn}

Here is the most important result of this paper. 

\begin{thm} \label{thm:140}
Let $(X, \AA)$ be a commutative DG ringed space, and let $\BB$ be a commutative 
DG $\AA$-ring. There exists a commutative pseudo-semi-free DG ring resolution of 
$\BB$ over $\AA$.
\end{thm}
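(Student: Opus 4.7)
My plan is to construct the resolution $\til{\BB} \to \BB$ as a directed union of pseudo-semi-free commutative DG $\AA$-rings $\AA = \til{\BB}_{-1} \sub \til{\BB}_0 \sub \til{\BB}_1 \sub \cdots$, equipped with compatible surjective DG $\AA$-ring homomorphisms $\psi_k : \til{\BB}_k \to \BB$ that get closer to being quasi-isomorphisms at each step. The key tools are Propositions \ref{prop:235} and \ref{prop:236}: the first lets me freely adjoin pseudo-generators and specify their images in $\BB$, while the second lets me freely specify their differentials as long as $\d \circ \d = 0$ is respected.

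For the inductive step, given $\til{\BB}_{k-1}$ and $\psi_{k-1}$, I build $\til{\BB}_k := \til{\BB}_{k-1} \ot_{\K_X} \K_X[I_k]$ where $I_k$ is a generator specification combining two families. The \emph{surjectivity family} consists, for every open $U \sub X$, every $n \leq 0$, and every section $b \in \Ga(U, \BB^n)$, of a pseudo-generator $t_{U,b}$ of degree $n$ and pseudo-support $U$ (after possibly shrinking $U$, see below), with $\psi(t_{U,b}) := b$ and $\d t_{U,b}$ chosen to be a local lift in $\til{\BB}_{k-1}$ of $\d b \in \Ga(U, \BB^{n+1})$. The \emph{cohomology-killing family} consists, for every open $U$ and every triple $(c, b)$ with $c \in \Ga(U, \til{\BB}_{k-1}^n)$ a cocycle and $b \in \Ga(U, \BB^{n-1})$ satisfying $\d b = \psi_{k-1}(c)$, of a pseudo-generator $s_{c,b}$ of degree $n-1$ and pseudo-support $U$, with $\psi(s_{c,b}) := b$ and $\d s_{c,b} := c$. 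Proposition \ref{prop:236} then assembles these data into a DG structure, using that the prescribed $\d$ values are themselves cocycles. Set $\til{\BB} := \bigcup_k \til{\BB}_k$; the union of the generator specifications exhibits $\til{\BB}^{\natural}$ as commutative pseudo-free over $\AA^{\natural}$.

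The verification runs as follows. Surjectivity of $\psi : \til{\BB} \to \BB$ is immediate from the surjectivity family. For surjectivity of $\DH(\psi)$: any local cocycle in $\BB$ lifts to a local section of $\til{\BB}$ by this family, and its differential, which is $\d b = 0$, is lifted to a coboundary of the chosen $\d t_{U,b}$, so one may adjust at the next stage. For injectivity of $\DH(\psi)$: any cocycle of $\til{\BB}_{k-1}$ whose image in $\BB$ is a coboundary is exactly the data handled by the cohomology-killing family, so it becomes a coboundary in $\til{\BB}_k$, hence in $\til{\BB}$.

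The main obstacle is the global existence of the local lifts $\d t_{U,b}$ of $\d b$: even if $\psi_{k-1}$ is surjective on stalks, a particular section $\d b \in \Ga(U, \BB^{n+1})$ need not be in the image of $\Ga(U, \til{\BB}_{k-1}^{n+1})$. The fix, which is why I index the generator specification over the full refined data $(U', b|_{U'}, c)$ with $c$ a local lift of $\d b|_{U'}$ on a subset $U' \sub U$, is to cover $U$ by opens $\{U'_\al\}$ over which such lifts exist (provided by the induction hypothesis applied stalkwise) and to adjoin one pseudo-generator per $\al$. The resulting generator specification is very large, but Definition \ref{dfn:210} allows arbitrary index sets; finiteness is only used in the sense that each stalk is a polynomial ring in the fiber variables (Proposition \ref{prop:240}). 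Carrying out this bookkeeping carefully, and verifying that the colimit is indeed pseudo-semi-free and that $\psi$ is a surjective quasi-isomorphism, completes the proof.
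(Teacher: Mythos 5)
Your overall architecture coincides with the paper's: an increasing chain of pseudo-free extensions built from Propositions \ref{prop:235} and \ref{prop:236}, with the geometric device of making stalkwise choices and extending them to open neighborhoods (shrinking as needed), then passing to the colimit. But your \emph{surjectivity family} has a genuine flaw. You prescribe $\d t_{U,b} := c$, where $c$ is ``a local lift in $\til{\BB}_{k-1}$ of $\d b$''. For Proposition \ref{prop:236}(2) to yield $\d \circ \d = 0$, the prescribed values must be cocycles, and an arbitrary lift of $\d b$ is not one: you only know $\psi_{k-1}(\d c) = \d(\d b) = 0$, i.e.\ $\d c \in \ker(\psi_{k-1})$, not $\d c = 0$. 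With such a prescription $\d^2 t_{U,b} = \d c \neq 0$ in general, so $\til{\BB}_k$ is not a DG ring, and your assertion that ``the prescribed $\d$ values are themselves cocycles'' is unjustified for this family. Worse, the problem sits exactly where the family is actually needed, the base step $k=0$: there $\til{\BB}_{-1} = \AA$, the structure map $\AA \to \BB$ need not be surjective even on stalks, so for $n \leq -1$ a lift of $\d b$ (cocycle or not) may fail to exist on any neighborhood --- your appeal to ``the induction hypothesis applied stalkwise'' has no content at $k = 0$. (For $k \geq 1$, once $\psi_{k-1}$ is surjective the family is redundant anyway; and there one could take $c := \d b'$ for a local lift $b'$ of $b$, which is a coboundary, hence a cocycle.)

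The paper circumvents this in two ways, either of which repairs your argument. First, adjoin the generators in cancelling pairs, as with the index sets $K''_{0,x}$ and $J''_{0,x} = \d(K''_{0,x})$ in the paper's proof of Theorem \ref{thm:140}: for each section $b \in \Ga(U, \BB^n)$ adjoin $t$ of degree $n$ and $s$ of degree $n+1$, with $\psi(t) := b$, $\psi(s) := \d b$, $\d t := s$ and $\d s := 0$; now every prescribed value is a cocycle, no lift into the previous stage is required, and the new cocycle $s$ is harmless since $s = \d t$ is already a coboundary. Second, alternatively, work degree by degree and carry surjectivity on coboundaries, $\opn{B}(\phi_q)$, as an induction hypothesis --- this is precisely condition (i) in the paper's proof; then locally $\d b = \psi(\d e)$ for some $e$, and after replacing $b$ by the cocycle $b - \psi(e)$ one may prescribe $\d t := 0$. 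Note also that even for cocycle sections $b$ your recipe must explicitly choose $\d t_{U,b} := 0$: as written, a ``lift of $\d b = 0$'' could be a nonzero element of $\ker(\psi_{k-1})$, which would break your $\DH$-surjectivity argument. The rest of your outline --- the cohomology-killing family (where $\d s_{c,b} := c$ \emph{is} a genuine cocycle, so Proposition \ref{prop:236} does apply), the refinement of opens to where the data exists, and the colimit step --- is sound and matches the paper's proof.
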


\begin{proof}[Sketch of Proof]
This is a geometrization of the proof of \cite[Theorem 3.21(1)]{Ye2}, 
replacing variables by pseudo-generators. Instead of 
\cite[Lemmas 3.19 and 3.20]{Ye2}, here we use Propositions \ref{prop:235} and 
\ref{prop:236}. 

As in the proof of \cite[Theorem 3.21(1)]{Ye2},
we shall construct an ascending sequence 
$F_0(\til{\BB}) \sub F_1(\til{\BB}) \sub \cdots$
of pseudo-semi-free DG rings in 
$\catt{DGR}^{\leq 0}_{\mrm{sc}} / \AA$,
together with a compatible sequence of homomorphisms
$\phi_q : F_q(\til{\BB}) \to \BB$.
Moreover, there will be an  ascending sequence 
$F_0(I) \sub F_1(I) \sub \cdots$
of generator specifications, and compatible isomorphisms 
\[ F_q(\til{\BB})^{\natural} \cong \AA^{\natural} \ot_{\K_X} \K_X[F_q(I)] \]
of graded $\AA^{\natural}$-rings. 
The following conditions will be satisfied: 
\begin{enumerate}
\rmitem{i} The graded sheaf homomorphisms 
$\phi_q : F_q(\til{\BB}) \to \BB$,
$\opn{B}(\phi_q) : \opn{B}(F_q(\til{\BB})) \to \opn{B}(\BB)$ and
$\opn{H}(\phi_q) : \opn{H}(F_q(\til{\BB})) \to \opn{H}(\BB)$ 
are surjective in degrees $\geq -q$. 

\rmitem{ii} The graded sheaf homomorphism
$\opn{H}(\phi_q) : \opn{H}(F_q(\til{\BB})) \to \opn{H}(\BB)$ 
is bijective in degrees $\geq -q + 1$. 
\end{enumerate}
In condition (i), $\opn{B}(-)$ denotes coboundaries. 
The DG ring 
\[ \til{\BB} := \lim_{q \to} F_q(\til{\BB}) \]
and the homomorphism 
\[ \phi := \lim_{q \to} \phi_q : \til{\BB} \to  \BB \]
will have the desired properties. 

We shall just start the actual proof, for $q = 0$, imitating the proof of 
\cite[Theorem 3.21(1)]{Ye2}, and emphasizing the geometric considerations that 
arise here. 

For any point $x \in X$ we have the ring homomorphism $\AA^0_x \to \BB^0_x$. 
Let $\opn{B}^0(\BB^0_x)$ be the module of $0$-coboundaries. 
We choose a collection $\{ c''_k \}_{k \in K''_{0, x}}$ of elements in 
$\BB^{-1}_x$, indexed by a set $K''_{0 ,x}$, such that the collection  
$\{ \d(c''_k) \}_{k \in K''_{0, x}}$
generates $\opn{B}^0(\BB^0_x)$ as an $\AA^0_x$-module. 
Let $J''_{0, x}$ be another set, with a bijection 
$\d : K''_{0 ,x} \to J''_{0 , x}$. Define 
$b''_{j} := \d(c''_k) \in \BB^0_x$ for any $k \in K''_{0, x}$ and 
$j = \d(k) \in J''_{0 , x}$, 
so we have a collection 
$\{ b''_j \}_{j \in J''_{0, x}}$ of elements of $\BB^{0}_x$.

Next choose a collection 
$\{ b'_j \}_{j \in J'_{0, x}}$ of elements in $\BB^{0}_x$, 
indexed by a set $J'_{0 , x}$, such that the collection 
$\{ b''_j \}_{j \in J''_0} \cup \{ b'_j \}_{j \in J'_0}$
generates $\BB^0_x$ as an $\AA^0_x$-ring.

The indexing sets $J'_{0, x}$, $J''_{0, x}$ and $K''_{0, x}$ here correspond, 
respectively, to the indexing sets $Y'_0$,  $Y''_{0}$ and $Z''_{0}$ in the 
proof of \cite[Theorem 3.21(1)]{Ye2}. Indeed, they would be the same if 
$X = \{ x \}$, a space with a single point in it. 

For any index $k \in K''_{0 ,x}$ there is an open neighborhood 
$U''_{k}$ of $x$ such the element $c''_k \in \BB^{-1}_x$ extends to an element 
$c''_k \in \Ga(U''_k, \BB^{-1})$. 
This choice also gives us an element 
\[ b''_j := \d(c''_k) \in \Ga(U''_k, \BB^0)  \]
for $j = \d(k) \in J''_{0, x}$. 
Likewise, for any index $j \in J'_{0 ,x}$ there is an open neighborhood 
$U'_{j}$ of $x$ such the element $b'_j \in \BB^{0}_x$ extends to 
an element $b'_j \in \Ga(U'_j, \BB^{0})$. 

Define the set 
\[ F_0(I_x) := J'_{0, x} \sqcup J''_{0, x} \sqcup K''_{0, x} . \]
For $i \in F_0(I_x)$ define the open set $U_i := U''_{k}$ 
if either $i = k \in K''_{0, x}$ or $i = \d(k) \in J''_{0, x}$; 
and define $U_i := U'_{j}$ if $i = j \in J'_{0, x}$.
Define the integer $n_i := -1$ if $i = k \in K''_{0, x}$; and 
define $n_i := 0$ if $i \in J''_{0, x}$ or $i \in J'_{0, x}$.
Thus we have a generator specification 
\begin{equation} \label{eqn:260}
\bigl( F_0(I_x), \{ U_i \}_{i \in F_0(I_x)}, \{ n_i \}_{i \in F_0(I_x)} 
\bigr)
\end{equation}
``around $x$''.

Taking the union of (\ref{eqn:260}) over all points $x \in X$ we get a 
``global'' generator specification 
\[ \bigl( F_0(I), \{ U_i \}_{i \in F_0(I)}, \{ n_i \}_{i \in F_0(I)} 
\bigr) . \]
Define the DG ring 
\[ F_0(\til{\BB}) :=  \AA \ot_{\K_X} \K_X[F_0(I)] \]
with differential $\d(t_k) := t_{\d(k)}$
for any index $k \in K''_{0, x} \sub F_0(I_x) \sub F_0(I)$. This is possible by 
Proposition \ref{prop:236}. According to Proposition \ref{prop:235} there is
a homomorphism 
$\phi_0 : F_0(\til{\BB}) \to \BB$
of DG $\AA$-rings. Checking at stalks we see that $\phi_0$ satisfies condition 
(i) above for $q = 0$; condition (ii) is trivial for $q = 0$.  

At this stage we can shrink the indexing set $F_0(I)$, as long as condition 
(i) holds for $q = 0$. See Remark \ref{rem:260} regarding the possibility 
to make the set $F_0(I)$ finite. 

From here on the construction of $F_q(\til{\BB})$ for $q \geq 1$ continues 
along the lines of the proof of \cite[Theorem 3.21(1)]{Ye2}, with geometric 
arguments very similar to those we used above: for every $q$ we go to 
stalks at points, choose elements, and extend them to  open sets. 
\end{proof}

\begin{dfn}  \label{dfn:230}
Let $\eta : \AA \to \AA^+$ be a homomorphism in 
$\catt{DGR}^{\leq 0}_{\mrm{sc}} / \K_X$.
We say that $\eta$ is a {\em split acyclic pseudo-semi-free homomorphism} if
$\eta$ is pseudo-semi-free and a quasi-isomorphism, and there is a homomorphism 
$\ep : \AA^+ \to \AA$ in $\catt{DGR}^{\leq 0}_{\mrm{sc}} / \K_X$
such that $\ep \circ \eta = \opn{id}_{\AA}$.
\end{dfn}

\begin{thm} \label{thm:215}
Suppose $\phi : \AA \to \BB$ is a quasi-isomorphism in 
$\catt{DGR}^{\leq 0}_{\mrm{sc}} / \K_X$.
Then $\phi$ can be factored as 
$\phi = \phi^+ \circ \eta$,
where 
$\phi^+ : \AA^{+} \to \BB$ is a surjective quasi-isomorphism, 
and 
$\eta : \AA \to \AA^{+}$
is a split acyclic pseudo-semi-free homomorphism. 
\end{thm}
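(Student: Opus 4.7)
The plan is to construct $\AA^+$ by adjoining pseudo-generators to $\AA$ in \emph{acyclic pairs}: pairs $(s, t)$ with $\d s = t$, where $\phi^+(s)$ is a prescribed section of $\BB$ and $\phi^+(t) = \d \phi^+(s)$ is then forced. A ``good'' pair, with $s$ in odd degree and $t$ in even degree, contributes a Koszul-type contractible piece to the underlying DG module, so that such a pseudo-semi-free extension is a quasi-isomorphism; the splitting $\ep$ will be the unique DG-ring homomorphism sending every adjoined pseudo-generator to zero.

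Concretely, for each point $x \in X$ and each section $b \in \BB^n_x$ needed in the image of $\phi^+$, I would extend $b$ to some open neighborhood $U$ of $x$ and adjoin pseudo-generators of pseudo-support $U$. For odd $n \leq -1$ I take $s$ in degree $n$, $t$ in degree $n+1$, set $\d s := t$, $\phi^+(s) := b$, $\phi^+(t) := \d b$; this places $b$ directly in the image. For $n = 0$, the section $b$ is automatically a cocycle since $\BB^1 = 0$, and I invoke that $\opn{H}^0(\phi)_x$ is an isomorphism to write $b = \phi(a) + \d c$ stalkwise with $a \in \AA^0_x$ a cocycle and $c \in \BB^{-1}_x$; extending $a$ and $c$ to an open neighborhood, I adjoin $s$ in degree $-1$, $t$ in degree $0$ with $\d s := t$, $\phi^+(s) := c$, $\phi^+(t) := b - \phi(a) = \d c$, whence $b = \phi(a) + \phi^+(t)$. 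The remaining even negative degrees $n \leq -2$ are handled by the analogous algebraic pattern from \cite{Ye2}: using the isomorphism of $\opn{H}^n(\phi)_x$ together with coboundary manipulation in $\BB$ and pseudo-generators already introduced in lower stages, one hits all sections in degree $n$ using only good pairs, thereby avoiding the torsion-cohomology issues that naive ``bad'' pairs (with $s$ in even degree and $t$ in odd degree) would introduce over $\Z$. Assembling these stalkwise data over all $x$ yields a global generator specification $I$, and Propositions \ref{prop:235} and \ref{prop:236} then assemble $\AA^+ := \AA \otimes_{\K_X} \K_X[I]$ together with the homomorphism $\phi^+ : \AA^+ \to \BB$.

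To verify the remaining properties, I would stratify the construction into an ascending chain $\AA = \AA_0 \subseteq \AA_1 \subseteq \cdots$ with each $\AA_{q+1}$ adjoining a single good acyclic pair, so that each step is a tensor product of $\AA_q$ with a Koszul-type contractible DG algebra. This makes each inclusion $\AA_q \to \AA_{q+1}$ a quasi-isomorphism, and hence so is $\eta : \AA \to \AA^+ = \lim_{q \to} \AA_q$. The identity $\phi = \phi^+ \circ \eta$ then forces $\phi^+$ to be a quasi-isomorphism, and surjectivity of $\phi^+$ on stalks is built into the construction. The splitting $\ep : \AA^+ \to \AA$ is defined as the unique DG $\AA$-ring homomorphism sending every adjoined pseudo-generator to $0$; this is consistent with $\d$ because $\d s = t$ and $\d t = 0$ both map to $0 = \d 0$, so Propositions \ref{prop:235} and \ref{prop:236} produce $\ep$, and plainly $\ep \circ \eta = \opn{id}_\AA$. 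The main obstacle is twofold: the geometric bookkeeping of extending stalkwise choices to coherent global generator specifications, exactly as in the sketch of Theorem \ref{thm:140}, and the more subtle algebraic choreography --- restricted to good acyclic pairs --- needed to hit even-negative-degree sections without spoiling the quasi-isomorphism property of $\eta$; for this second point the paper directs the reader to the analogous algebraic argument in \cite{Ye2}.
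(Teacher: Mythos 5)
Your overall architecture --- $\AA^+ = \AA \ot_{\K_X} \CC$ with $\CC$ pseudo-semi-free, split via the homomorphism $\ep$ that kills the adjoined pseudo-generators --- is exactly the shape asserted in the paper's (one-sentence) proof sketch, and your degree-$0$ step, your odd-degree step, and the quasi-isomorphism property of $\eta$ for good pairs are all correct. But your central claim for even degrees $n \leq -2$, namely that ``one hits all sections in degree $n$ using only good pairs,'' is a genuine gap: it is false over $\Z$. If every adjoined pair $(s,t)$ has $s$ odd and $\phi^+(t) = \d\, \phi^+(s)$ forced, then the image of $\phi^+$ is contained in the subring of $\BB$ generated by $\phi(\AA)$, odd-degree local sections, and coboundaries of odd-degree sections. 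Take $X$ a point, $\K = \Z$, $\AA = \Z$, and let $\BB := \Gamma_{\Z}[b] \ot \Lambda_{\Z}[e]$ be the divided power algebra on $b$ in degree $-2$ (with divided powers $b^{(k)}$ in degree $-2k$, so $b^2 = 2\, b^{(2)}$, etc.) tensored with the exterior algebra on $e$ in degree $-1$, with differential $\d(b^{(k)}) = b^{(k-1)}\, e$ and $\d(e) = 0$. One checks $\opn{H}(\BB) = \Z$, so $\phi : \Z \to \BB$ is a quasi-isomorphism. But every odd element of $\BB$ is a $\Z$-combination of the $b^{(k)} e$, all of which are cocycles, and any product of two odd elements vanishes (it contains $e^2 = 0$); hence the subring generated by $\Z$, $\BB^{\mathrm{odd}}$ and $\d(\BB^{\mathrm{odd}})$ is just $\Z \oplus \BB^{\mathrm{odd}}$, which misses $b$. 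So no choice of good pairs, in any number of stages, makes $\phi^+$ surjective: the element $b$ is neither a cocycle (so the $\opn{H}^n(\phi)$-surjectivity argument you use in degree $0$ cannot reach it --- surjectivity of $\phi^+$ is a statement about sections, not cohomology classes) nor decomposable into odd sections and coboundaries.

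What the paper's stronger claim --- that $\CC$ is merely a split contractible pseudo-semi-free DG ring mapping to $\BB$ --- actually permits, and what the repair requires, is a Tate-style iteration: one must adjoin even pseudo-generators $u \mapsto b$ with $\d u = v$, $v \mapsto \d b$ (your ``bad pairs''), which over $\Z$ destroys acyclicity of the free algebra ($\opn{H}^{-3}(\Z[u,v]) \cong \Z/2$, generated by $[uv]$), and then restore acyclicity by adjoining further generators whose differentials involve cross-terms among earlier generators, with images chosen compatibly in $\BB$: in the example above, $w$ in degree $-4$ with $\d w = uv$ and $w \mapsto b^{(2)}$, then an odd generator killing the new cocycle $u^2 - 2w$ (which maps to $b^2 - 2\, b^{(2)} = 0$ in $\BB$), and so on. The resulting $\CC$ is still split contractible with $\ep$ killing all generators, as you propose, but its differential is not of the disjoint paired Koszul form, and showing that each successive obstruction class admits a compatible image in $\BB$ is the real content of the theorem, which your proposal elides. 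Note also that the paper's sketch for this theorem does not invoke \cite{Ye2} at this point, so the appeal to ``the analogous algebraic pattern from \cite{Ye2}'' does not by itself supply the missing even-degree argument; over a field of characteristic $0$ your plan could be patched (bad pairs are then harmless), but the paper explicitly works over $\Z$.
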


In a commutative diagram: 
\[ \UseTips \xymatrix @C=8ex @R=8ex {
&
\AA^+ 
\ar@{->>}[dr]^{\phi^+}
\ar@{->>}[dl]_{\ep}
\\
\AA
&
\AA
\ar@{>->}[u]_(0.4){\eta}
\ar@{>->>}[l]_{\opn{id}}
\ar[r]^{\phi}
&
\BB
} \]

\begin{proof}[Sketch of Proof]
We actually prove more: there is a split contractible commutative  
pseudo-semi-free DG ring $\CC$, and a homomorphism 
$\CC \to \BB$, such that $\AA^+ = \lb \AA \ot_{\K_X} \CC$. 
\end{proof}

\begin{thm} \label{thm:141}
Let $\AA \in \catt{DGR}^{\leq 0}_{\mrm{sc}} / \K$,
let $\BB \in \catt{DGR}^{\leq 0}_{\mrm{sc}} / \AA$,
and for $i = 0, 1$ let 
$\phi_i : \til{\BB}_i \to \BB$ be quasi-isomorphisms 
in $\catt{DGR}^{\leq 0}_{\mrm{sc}} / \AA$. 
Then there exists a pseudo-semi-free DG ring
$\til{\BB}' \in \catt{DGR}^{\leq 0}_{\mrm{sc}} / \AA$,
together with quasi-isomorphisms 
$\psi_i : \til{\BB}' \to \til{\BB}_i$
in $\catt{DGR}^{\leq 0}_{\mrm{sc}} / \AA$,  such that 
$\phi_0 \circ \psi_0 = \phi_1 \circ \psi_1$. 
\end{thm}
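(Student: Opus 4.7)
The plan is to produce $\til{\BB}'$ as a pseudo-semi-free resolution (via Theorem \ref{thm:140}) of a fiber product that encodes both maps $\phi_0$ and $\phi_1$. The naive fiber product $\til{\BB}_0 \times_\BB \til{\BB}_1$ will not do directly, because its projections are quasi-isomorphisms only when one of the $\phi_i$ is surjective. So I would first apply Theorem \ref{thm:215} to factor $\phi_0 = \phi_0^+ \circ \eta_0$, where $\phi_0^+ : \til{\BB}_0^+ \to \BB$ is a surjective quasi-isomorphism, $\eta_0 : \til{\BB}_0 \to \til{\BB}_0^+$ is a split acyclic pseudo-semi-free extension, and $\ep_0 : \til{\BB}_0^+ \to \til{\BB}_0$ is the retraction. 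Since $\eta_0$ is pseudo-semi-free, $\til{\BB}_0^+$ is still pseudo-semi-free over $\AA$.

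Next, form the fiber product $\PP := \til{\BB}_0^+ \times_\BB \til{\BB}_1$ in $\catt{DGR}^{\leq 0}_{\mrm{sc}}/\AA$, with projections $\pi_0, \pi_1$. Because $\phi_0^+$ is a surjective quasi-isomorphism, its kernel is acyclic, so the projection $\pi_1 : \PP \surj \til{\BB}_1$ is a surjective quasi-isomorphism (its kernel is canonically isomorphic to that of $\phi_0^+$ as a DG $\AA$-module); symmetrically $\pi_0$ is a quasi-isomorphism. Invoke Theorem \ref{thm:140} to obtain a pseudo-semi-free resolution $\rho : \til{\BB}' \to \PP$ in $\catt{DGR}^{\leq 0}_{\mrm{sc}}/\AA$, and set $\psi_1 := \pi_1 \circ \rho$ and $\psi_0 := \ep_0 \circ \pi_0 \circ \rho$. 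Both are compositions of quasi-isomorphisms, hence quasi-isomorphisms.

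The hard part will be the strict equality $\phi_0 \circ \psi_0 = \phi_1 \circ \psi_1$. The tautology of the fiber product gives $\phi_1 \circ \psi_1 = \phi_0^+ \circ \pi_0 \circ \rho$, so what remains is to force $\phi_0 \circ \ep_0 = \phi_0^+$ on the image of $\pi_0 \circ \rho$. Although both maps $\til{\BB}_0^+ \to \BB$ are quasi-isomorphisms agreeing on $\til{\BB}_0 \subseteq \til{\BB}_0^+$, they typically differ on the extra pseudo-generators introduced by $\eta_0$. The natural remedy is to not apply Theorem \ref{thm:140} as a black box, but to adapt its filtered construction $F_q(\til{\BB}')$: at each stage, for every local section of $\BB$ (or cohomology class, or coboundary) that needs to be covered, simultaneously choose compatible local lifts $a_0 \in \til{\BB}_0$ and $a_1 \in \til{\BB}_1$ with $\phi_0(a_0) = \phi_1(a_1) \in \BB$ on the nose (such compatible lifts exist on small enough open sets because the $\phi_i$ are surjective on cohomology and $\AA$-linear), and introduce a single new pseudo-generator $t$ on the intersection of their supports with $\psi_0(t) := a_0$, $\psi_1(t) := a_1$. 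The auxiliary objects $\til{\BB}_0^+$ and $\PP$ are used only to guarantee that this two-sided lifting procedure terminates in quasi-isomorphisms $\psi_i$, while strict commutativity holds by construction on every pseudo-generator of $\til{\BB}'$ and hence globally.
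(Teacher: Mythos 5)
You are right on two counts: the naive fiber product can fail, and composing with the retraction $\ep_0$ destroys strict equality, since $\phi_0 \circ \ep_0 = \phi_0^+ \circ \eta_0 \circ \ep_0$ differs from $\phi_0^+$ on the pseudo-generators adjoined by $\eta_0$. But the repair you propose rests on a false claim: that compatible lifts with $\phi_0(a_0) = \phi_1(a_1)$ \emph{on the nose} exist locally because the $\phi_i$ are surjective on cohomology. Surjectivity of $\opn{H}(\phi_i)$ only matches representatives up to a coboundary, $\phi_0(a_0) - \phi_1(a_1) = \d(c)$ with $c$ a local section of $\BB^{-1}$, and cancelling the discrepancy requires
$\d(c) \in \d \bigl( \phi_0(\til{\BB}_0^{-1}) + \phi_1(\til{\BB}_1^{-1}) \bigr)$,
which being a quasi-isomorphism does not provide when neither $\phi_i$ is surjective. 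The obstruction is intrinsic, not an artifact of your bookkeeping: any pair with $\phi_0 \circ \psi_0 = \phi_1 \circ \psi_1$ factors through the honest fiber product $\PP := \til{\BB}_0 \times_{\BB} \til{\BB}_1$, so everything is bounded by how big $\PP$ is, and $\PP$ can be too small already over a point. Take $\K = \AA = k$ a field, $\til{\BB}_0 = \til{\BB}_1 := k[w]$ in degree $0$, and $\BB := k[w,x] \oplus k[w,x] \cd v$ with $w, x$ in degree $0$, $v$ in degree $-1$, $v^2 = 0$ and $\d(v) := x$; then $\opn{H}(\BB) = k[w]$, and $\phi_0(w) := w$, $\phi_1(w) := w + x$ are both quasi-isomorphisms. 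The condition $f(w) = g(w + x)$ in $k[w,x]$ forces $f = g$ constant, so $\PP = k$, and no $\psi_0$ factoring through $\PP$ can be surjective on $\opn{H}^0$. Hence your two-sided lifting procedure cannot terminate in this example --- and in fact the example shows that Theorem \ref{thm:141}, read with completely arbitrary quasi-isomorphisms, needs a surjectivity hypothesis on at least one $\phi_i$.

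In the situation the paper actually uses the theorem, this hypothesis is available, and then your own first steps already finish the proof with no retraction at all: resolutions in the sense of Definition \ref{dfn:181} are \emph{surjective} quasi-isomorphisms by definition (this is the setting of Theorem \ref{thm:142}, and in substance the setting of \cite[Theorem 3.22]{Ye2}, to which the paper's two-sentence sketch defers). If, say, $\phi_0$ is surjective, then the sequence $0 \to \PP \to \til{\BB}_0 \oplus \til{\BB}_1 \to \BB \to 0$ is exact, so both projections $\pi_i : \PP \to \til{\BB}_i$ are quasi-isomorphisms (with $\pi_1$ surjective and with acyclic kernel isomorphic to $\opn{ker}(\phi_0)$); taking a pseudo-semi-free resolution $\rho : \til{\BB}' \to \PP$ via Theorem \ref{thm:140} and setting $\psi_i := \pi_i \circ \rho$ gives quasi-isomorphisms with $\phi_0 \circ \psi_0 = \phi_1 \circ \psi_1$ holding by the very definition of $\PP$. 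The ``tailoring to the open sets of $\til{\BB}_0$ and $\til{\BB}_1$'' in the paper's sketch then happens automatically inside the resolution of $\PP$. So the fix is: drop the detour through Theorem \ref{thm:215} and the retraction, assume one $\phi_i$ surjective (or, for non-surjective $\phi_i$, weaken the conclusion to agreement up to relative quasi-homotopy, which is what the machinery of Section \ref{sec:quasi-hom} actually consumes), and resolve the fiber product directly.
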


The statement is shown in the next commutative diagram in the category  \lb 
$\catt{DGR}^{\leq 0}_{\mrm{sc}} / \AA$.

\[ \UseTips \xymatrix @C=4ex @R=4ex {
&
\til{\BB}'
\ar@{-->}[dl]_{\psi_0}
\ar@{-->}[dr]^{\psi_1}
\\
\til{\BB}_0
\ar@{->}[dr]_{\phi_0}
&
&
\til{\BB}_1
\ar@{->}[dl]^{\phi_1}
\\
&
\BB
} \]

\begin{proof}[Sketch of Proof]
This is similar to the proof of \cite[Theorem 3.22]{Ye2}.
But the \lb pseudo-semi-free DG ring $\til{\BB}'$ has to be tailored, in terms 
of the open sets involved, to the DG rings $\til{\BB}_0$ and $\til{\BB}_1$.  
\end{proof}

Of course, by induction, this can be extended to any {\em finite} number of 
quasi-iso\-morph\-isms $\phi_i : \til{\BB}_i \to \BB$. 

\begin{rem}  \label{rem:180}
The construction of the DG ring $\til{\BB}$ in Theorem \ref{thm:141}
involves refinement. There is finiteness built into it (since open sets allow 
only finite intersections). In general, a single $\til{\BB}'$ will not work for 
an infinite collection of quasi-isomorphisms
$\phi_i : \til{\BB}_i \to \BB$. 

This seems to indicate that there are no cofibrant objects in 
$\catt{DGR}^{\leq 0}_{\mrm{sc}} / \K_X$, and thus there is no Quillen model 
structure!
\end{rem}

\section{Relative Quasi-Homotopies and the Derived Category}
\label{sec:quasi-hom}

The next definition is a variant of the left homotopy from Quillen theory (see 
\cite{Ho}). The DG ring $\BB^+$ plays the role of a {\em cylinder object}. 

\begin{dfn} \label{dfn:232}
Let $\AA \in \catt{DGR}^{\leq 0}_{\mrm{sc}} / \K_X$, and let 
$\phi_0, \phi_1 : \BB \to \CC$ be homomorphisms in 
$\catt{DGR}^{\leq 0}_{\mrm{sc}} / \AA$. A {\em homotopy between $\phi_0$ and 
$\phi_1$ relative to $\AA$} is a commutative diagram 
\[ \UseTips \xymatrix @C=8ex @R=6ex {
\BB
&
\BB \ot_{\AA} \BB
\ar@{->>}[l]_(0.6){\mu}
\ar[r]^(0.6){\phi_0 \, \ot \, \phi_1}
\ar[d]_{\eta}
&
\CC
\\
&
\BB^{+}
\ar@{->>}[ul]^{\ep}
\ar[ur]_{\phi}
} \]
in $\catt{DGR}^{\leq 0}_{\mrm{sc}} / \AA$,
where $\mu$ is the multiplication homomorphism, and $\ep$ is a 
quasi-iso\-morph\-ism. If a homotopy exists, then we say that  $\phi_0$ and 
$\phi_1$ are {\em homotopic relative to $\AA$}.
\end{dfn}

\begin{dfn} \label{dfn:233}
Let $\phi_0, \phi_1 : \BB \to \CC$ be homomorphisms in 
$\catt{DGR}^{\leq 0}_{\mrm{sc}} / \AA$. The homomorphisms $\phi_0$ and 
$\phi_1$ are said to be {\em quasi-homotopic relative to $\AA$} is there is a 
quasi-isomorphism $\psi : \til{\BB} \to \BB$ in 
$\catt{DGR}^{\leq 0}_{\mrm{sc}} / \AA$
such that $\phi_0 \circ \psi$ and $\phi_1 \circ \psi$ are homotopic relative 
to $\AA$, in the sense of Definition \ref{dfn:232}. 
This  relation on morphisms in 
$\catt{DGR}^{\leq 0}_{\mrm{sc}} / \AA$ is called {\em relative quasi-homotopy}. 
\end{dfn}

\[ \UseTips \xymatrix @C=8ex @R=6ex {
\til{\BB}
\ar[r]^{\psi}
\ar@(ur,ul)[rr]^{\phi_i \, \circ \, \psi}
&
\BB
\ar[r]^{\phi_i}
&
\CC
} \]

\begin{thm} \label{thm:245}
Suppose $\til{\phi}_0, \til{\phi}_1 : \til{\BB} \to \til{\CC}$,
$\phi_0, \phi_1 : \til{\BB} \to \CC$ and $\si : \til{\CC} \to \CC$
are homomorphisms in $\catt{DGR}^{\leq 0}_{\mrm{sc}} / \AA$, 
such that  that 
$\phi_i = \si \circ \til{\phi}_i$, 
$\si$ is a quasi-isomorphism, and the homomorphisms 
$\phi_0$ and $\phi_1$ are homotopic relative to $\AA$. 
Then there is a pseudo-semi-free resolution 
$\til{\psi}: \til{\BB}' \to \til{\BB}$
in $\catt{DGR}^{\leq 0}_{\mrm{sc}} / \AA$,
such that 
$\til{\phi}_0 \circ \til{\psi}$ and $\til{\phi}_1  \circ \til{\psi}$ are 
homotopic relative to $\AA$. 
\end{thm}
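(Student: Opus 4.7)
The plan is to construct the pseudo-semi-free resolution $\tilde\psi : \tilde\BB' \to \tilde\BB$ together with a new cylinder object for $\tilde\BB'$ mapping to $\tilde\CC$, in a coordinated fashion inspired by the proofs of Theorems \ref{thm:140} and \ref{thm:141}.

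First, apply Theorem \ref{thm:140} to produce a pseudo-semi-free resolution $\tilde\psi : \tilde\BB' \to \tilde\BB$ in $\catt{DGR}^{\leq 0}_{\mrm{sc}} / \AA$. If necessary, replace $\BB^+$ using Theorem \ref{thm:215} applied to $\epsilon$, so that $\epsilon : \BB^+ \to \tilde\BB$ becomes a \emph{surjective} quasi-isomorphism, without disturbing the rest of the homotopy data. To obtain the new cylinder, view $\tilde\BB'$ as an object of the slice category $\catt{DGR}^{\leq 0}_{\mrm{sc}} / \tilde\BB' \otimes_\AA \tilde\BB'$ via the multiplication map $\mu_{\tilde\BB'}$, and apply Theorem \ref{thm:140} in that category to obtain a pseudo-semi-free resolution $\tilde\epsilon : (\tilde\BB')^+ \to \tilde\BB'$. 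This packages a pseudo-semi-free extension $\tilde\eta : \tilde\BB' \otimes_\AA \tilde\BB' \to (\tilde\BB')^+$ together with a surjective quasi-isomorphism $\tilde\epsilon$ satisfying $\tilde\epsilon \circ \tilde\eta = \mu_{\tilde\BB'}$, giving $(\tilde\BB')^+$ the structure of a cylinder for $\tilde\BB'$.

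The central step is to construct a homomorphism $\tilde\phi : (\tilde\BB')^+ \to \tilde\CC$ in $\catt{DGR}^{\leq 0}_{\mrm{sc}} / \AA$ satisfying $\tilde\phi \circ \tilde\eta = (\tilde\phi_0 \circ \tilde\psi) \otimes (\tilde\phi_1 \circ \tilde\psi)$. I would first produce an auxiliary homomorphism $\tau : (\tilde\BB')^+ \to \BB^+$ with $\tau \circ \tilde\eta = \eta \circ (\tilde\psi \otimes \tilde\psi)$ and $\epsilon \circ \tau = \tilde\psi \circ \tilde\epsilon$; such a $\tau$ exists by a Theorem \ref{thm:141}-type lifting argument, because $(\tilde\BB')^+$ is pseudo-semi-free over $\tilde\BB' \otimes_\AA \tilde\BB'$ and $\epsilon$ is a surjective quasi-isomorphism, so pseudo-generators can be lifted inductively with matching differentials. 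Composing with $\phi$ produces $\phi \circ \tau : (\tilde\BB')^+ \to \CC$. The desired $\tilde\phi$ is then a lift of $\phi \circ \tau$ through $\sigma$, built by induction along the pseudo-semi-free filtration of $(\tilde\BB')^+$ over $\tilde\BB' \otimes_\AA \tilde\BB'$. On the base $\tilde\BB' \otimes_\AA \tilde\BB'$ we take $\tilde\phi := (\tilde\phi_0 \circ \tilde\psi) \otimes (\tilde\phi_1 \circ \tilde\psi)$, which lifts $\phi \circ \tau$ through $\sigma$ by the hypothesis $\phi_i = \sigma \circ \tilde\phi_i$. For each new pseudo-generator $t_i$ introduced at a later stage, we choose $\tilde\phi(t_i) \in \Gamma(U_i, \tilde\CC^{n_i})$ with $\sigma(\tilde\phi(t_i)) = \phi(\tau(t_i))$ and $\d\, \tilde\phi(t_i) = \tilde\phi(\d t_i)$, then assemble the resulting stalk-local data into a well-defined DG ring homomorphism via Propositions \ref{prop:235} and \ref{prop:236}.

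The main obstacle is the existence of these stalk-local lifts during the induction: at each stalk $x \in U_i$ we need an element of $\tilde\CC^{n_i}_x$ whose image under $\sigma$ is the prescribed element in $\CC^{n_i}_x$ and whose differential equals the previously-defined $\tilde\phi(\d t_i)_x$. This reduces to showing that a certain discrepancy cocycle (the difference between the differential of a na\"ively chosen lift and $\tilde\phi(\d t_i)_x$) is a coboundary in $\tilde\CC_x$; this follows from the fact that the discrepancy maps to zero in $\CC_x$ by construction, combined with the bijectivity of $\opn{H}(\sigma)$ at the stalk. Extending the stalk-level lifts to open neighborhoods, possibly after shrinking the $U_i$, uses the same geometric techniques as in the sketched proof of Theorem \ref{thm:140}. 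Finally, verifying that $((\tilde\BB')^+, \tilde\eta, \tilde\epsilon, \tilde\phi)$ realizes a homotopy between $\tilde\phi_0 \circ \tilde\psi$ and $\tilde\phi_1 \circ \tilde\psi$ relative to $\AA$ is a direct check against Definition \ref{dfn:232}.
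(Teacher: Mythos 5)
You are judged here against the paper's toolkit rather than a written proof, since the paper (a preview) states Theorem \ref{thm:245} without proof. Your overall scaffolding is reasonable, but the central lifting step fails as written. You insist on a \emph{strict} lift through $\si$: for each new pseudo-generator you demand $\si(\til{\phi}(t_i)) = \phi(\tau(t_i))$, and claim stalkwise solvability follows from a discrepancy cocycle being a coboundary together with bijectivity of $\opn{H}(\si)$ on stalks. But $\si$ is only a quasi-isomorphism; it need not be surjective in any degree, even on stalks, so the equation $\si(\til{c}) = c$ can have \emph{no} solution at all, and no coboundary correction helps: correcting a candidate lift by a coboundary in $\til{\CC}_x$ only moves its image within $\si(\til{\CC}_x)$. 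Concretely, for $X$ a point, the inclusion $\K \to \K[x,e]$ with $\deg(x) = 0$, $\deg(e) = -1$, $\d(e) = x$ is a quasi-isomorphism under which $x$ has no preimage. Your surjectivity repair via Theorem \ref{thm:215} is aimed at the wrong map: $\ep$ is automatically surjective, since $\ep \circ \eta = \mu$ and $\mu$ is split by $b \mapsto b \ot 1$; it is $\si$ that must be factored, as $\si = \si^+ \circ \eta_{\til{\CC}}$ with $\si^+ : \til{\CC}^+ \to \CC$ a surjective quasi-isomorphism and a retraction $\ep_{\til{\CC}} : \til{\CC}^+ \to \til{\CC}$. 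One lifts through $\si^+$ and composes with $\ep_{\til{\CC}}$; this is harmless because Definition \ref{dfn:232} imposes no compatibility between the homotopy $\til{\phi}$ and $\si$, so the retraction costs nothing.

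There is a second, sheaf-theoretic gap: you construct $(\til{\BB}')^+$ \emph{in advance} by Theorem \ref{thm:140} over $\til{\BB}' \ot_{\AA} \til{\BB}'$, thereby fixing the pseudo-supports $U_i$, and only afterwards try to lift pseudo-generators into $\BB^+$ (for $\tau$) and $\til{\CC}$ (for $\til{\phi}$). Surjectivity of a map of sheaves yields lifts of sections only locally, not over a prescribed $U_i$; and your remedy ``possibly after shrinking the $U_i$'' is not a patch, because shrinking the pseudo-supports changes the ring $(\til{\BB}')^+$ and invalidates the already-established resolution property. This is exactly the tailoring/refinement phenomenon the paper emphasizes in the sketch of Theorem \ref{thm:141} and in Remark \ref{rem:180}: the lifting must be built into the construction of the cylinder, not performed after it. A clean repair inside the paper's toolkit: since Definition \ref{dfn:232} does not require the cylinder to be pseudo-semi-free, take degreewise fiber products of sheaves $Q := \BB^+ \times_{\CC} \til{\CC}^+$ (along $\phi$ and $\si^+$) and $R := Q \times_{\til{\BB}} \til{\BB}'$ (along $\ep \circ \opn{pr}_{\BB^+}$ and an arbitrary resolution $\til{\psi}$ from Theorem \ref{thm:140}). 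The kernels $\opn{ker}(\si^+)$ and $\opn{ker}(\ep)$ are acyclic, so $\til{\ep} := \opn{pr}_{\til{\BB}'} : R \to \til{\BB}'$ is a surjective quasi-isomorphism. Writing $\be := (\til{\phi}_0 \circ \til{\psi}) \ot (\til{\phi}_1 \circ \til{\psi})$, the map $\til{\eta} : \til{\BB}' \ot_{\AA} \til{\BB}' \to R$ with components $\eta \circ (\til{\psi} \ot \til{\psi})$, $\eta_{\til{\CC}} \circ \be$ and $\mu_{\til{\BB}'}$ is well defined, because $\si^+ \circ \eta_{\til{\CC}} \circ \be = \si \circ \be = \phi \circ \eta \circ (\til{\psi} \ot \til{\psi})$ (using $\phi_i = \si \circ \til{\phi}_i$ and $\phi \circ \eta = \phi_0 \ot \phi_1$) and $\ep \circ \eta \circ (\til{\psi} \ot \til{\psi}) = \til{\psi} \circ \mu_{\til{\BB}'}$. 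Then $\til{\phi} := \ep_{\til{\CC}} \circ \opn{pr}_{\til{\CC}^+}$ satisfies $\til{\phi} \circ \til{\eta} = \be$, so $(R, \til{\eta}, \til{\ep}, \til{\phi})$ is a homotopy between $\til{\phi}_0 \circ \til{\psi}$ and $\til{\phi}_1 \circ \til{\psi}$ relative to $\AA$, with no generator-by-generator lifting required; this simultaneously removes both gaps in your argument.
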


Here are the commutative diagrams in 
$\catt{DGR}^{\leq 0}_{\mrm{sc}} / \AA$, for $i = 0, 1$~:
\[ \UseTips \xymatrix @C=8ex @R=6ex {
\til{\BB}'
\ar@{-->}[r]^{\til{\psi}}
\ar@{-->}[dr]_{\til{\phi_i} \, \circ \, \til{\psi}}
&
\til{\BB}
\ar[d]^{\til{\phi}_i}
\ar[dr]^{\phi_i}
\\
&
\til{\CC}
\ar[r]^{\si}
&
\CC
} \]

\begin{thm} \label{thm:232}
Let $\AA \in \catt{DGR}^{\leq 0}_{\mrm{sc}} / \K_X$.
The relation of relative quasi-homotopy is a congruence on the 
category $\catt{DGR}^{\leq 0}_{\mrm{sc}} / \AA$. 
\end{thm}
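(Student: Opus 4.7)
My plan is to verify directly the three defining properties of a congruence: that relative quasi-homotopy is reflexive, symmetric, and transitive, and that it is stable under pre- and post-composition with arbitrary homomorphisms in $\catt{DGR}^{\leq 0}_{\mrm{sc}} / \AA$.

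Reflexivity of $\phi : \BB \to \CC$ is witnessed by taking $\psi := \mrm{id}_\BB$, $\BB^+ := \BB$, $\eta := \mu$, $\ep := \mrm{id}_\BB$, and the homotopy map equal to $\phi$; the required identity $\phi \circ \mu = \phi \otimes \phi$ is then exactly the ring-homomorphism axiom. Symmetry follows by precomposing a witnessing diagram with the flip automorphism $\tau$ of $\BB \otimes_\AA \BB$, using the graded commutativity of $\BB$ and $\CC$ so that $\mu \circ \tau = \mu$ and $(\phi_0 \otimes \phi_1) \circ \tau = \phi_1 \otimes \phi_0$. Post-composition with $\sigma : \CC \to \CC'$ is immediate: replace the homotopy map $\BB^+ \to \CC$ by its composition with $\sigma$, leaving the rest of the diagram unchanged.

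Pre-composition with $\sigma : \DD \to \BB$ is more subtle, because we must transport a witnessing diagram from $\BB$ down to $\DD$. Given homotopy data $(\psi : \tilde\BB \to \BB,\, \BB^+,\, \eta,\, \ep,\, \phi)$, the strategy is to build a pseudo-semi-free resolution $\tilde\DD \to \DD$ together with a compatible lift $\omega : \tilde\DD \to \tilde\BB$ of the composite $\tilde\DD \to \DD \xrightarrow{\sigma} \BB$ along $\psi$. Such a lift can be obtained by the same pseudo-generator-by-pseudo-generator construction that proves Theorem \ref{thm:140}, after first applying Theorem \ref{thm:215} to replace $\psi$ by a surjective quasi-iso\-morph\-ism. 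Then the base-changed cylinder $\DD^+ := \tilde\DD \otimes_{\tilde\BB} \BB^+$ witnesses $\phi_0 \circ \sigma \sim \phi_1 \circ \sigma$ over $\tilde\DD$, with the K-flatness of pseudo-semi-free DG rings from Proposition \ref{prop:180} ensuring that the base-changed $\ep$ remains a quasi-iso\-morph\-ism.

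The main obstacle is transitivity, which amounts to concatenating two cylinder objects. Given witnessing data for $\phi_0 \sim \phi_1$ over $\psi_1 : \tilde\BB_1 \to \BB$ with cylinder $\BB^+_{01}$, and for $\phi_1 \sim \phi_2$ over $\psi_2 : \tilde\BB_2 \to \BB$ with cylinder $\BB^+_{12}$, I would first apply Theorem \ref{thm:141} to obtain a common pseudo-semi-free $\tilde\BB$ mapping quasi-iso\-morph\-ically into both $\tilde\BB_i$ and equalizing the compositions to $\BB$. Base-change each cylinder to $\tilde\BB$, replacing by pseudo-semi-free DG $\tilde\BB$-rings via Theorem \ref{thm:140} to secure flatness; then form the pushout $\BB^{++} := \BB^+_{01} \otimes_{\tilde\BB} \BB^+_{12}$ along the two ``middle endpoint'' inclusions $\tilde\BB \rightrightarrows \BB^+_{ij}$ corresponding to the shared map $\phi_1$. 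Proposition \ref{prop:180} guarantees that $\BB^{++} \to \tilde\BB$ remains a quasi-iso\-morph\-ism; the induced map $\BB \otimes_\AA \BB \to \BB^{++}$, sending $b_0 \otimes b_2$ into the two outer factors, combined with the pushout of the two homotopy maps to $\CC$, supplies the cylinder witnessing $\phi_0 \circ \psi \sim \phi_2 \circ \psi$. The hard part will be carefully justifying that K-flatness persists through the successive base changes and pushouts, with Theorem \ref{thm:245} likely invoked as an auxiliary tool to propagate the homotopy maps through the various refinements.
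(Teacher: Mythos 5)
You should first be aware that the paper itself gives no proof of Theorem \ref{thm:232} --- it is a preview, and this theorem is stated without even a sketch --- so there is no argument of the author's to compare yours against; your proposal has to stand on its own. The easy clauses you handle correctly: reflexivity via the trivial cylinder $\BB^+ := \BB$, $\eta := \mu$, $\ep := \opn{id}$ (the identity $\phi \circ \mu = \phi \ot \phi$ is indeed just multiplicativity of $\phi$); symmetry via the Koszul-signed flip $\tau$ of $\BB \ot_\AA \BB$, replacing $\eta$ by $\eta \circ \tau$; and post-composition by replacing the homotopy map $\phi$ with $\si \circ \phi$.

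The genuine gap lies in the two hard clauses, and it is the same error in both: your base changes run in the wrong direction. In the pre-composition step you form $\DD^+ := \til{\DD} \ot_{\til{\BB}} \BB^+$, but this tensor product is not defined: the only map available is $\omega : \til{\DD} \to \til{\BB}$, which does not make $\til{\DD}$ into a $\til{\BB}$-ring. A cylinder over $\til{\BB}$ can be pushed \emph{forward} along a homomorphism $\th : \til{\BB} \to \EE$, by forming $(\EE \ot_\AA \EE) \ot_{\til{\BB} \ot_\AA \til{\BB}} \BB^+$; it cannot be pulled \emph{back} along a map into $\til{\BB}$, and pre-composition needs exactly the pull-back direction. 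The same defect infects your transitivity argument: after Theorem \ref{thm:141} produces $\chi_i : \til{\BB} \to \til{\BB}_i$, the two cylinders still live over $\til{\BB}_1$ and $\til{\BB}_2$ (their augmentations are $\ep_{01} : \BB^+_{01} \to \til{\BB}_1$, etc.), so ``base-changing each cylinder to $\til{\BB}$'' along the $\chi_i$ is again undefined, and the concatenation $\BB^+_{01} \ot_{\til{\BB}} \BB^+_{12}$ cannot be formed as stated. Note also that the K-flatness you quote from Proposition \ref{prop:180} is flatness over $\AA$, whereas the claim that $\ep_{01} \ot \ep_{12}$ is a quasi-isomorphism would require flatness over $\til{\BB}$ via the middle-endpoint maps --- a different assertion entirely. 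What is actually required, and what Theorem \ref{thm:245} is designed to encapsulate, is the construction of a \emph{fresh} cylinder on a pseudo-semi-free common resolution of the source, obtained by lifting homomorphisms through surjective quasi-isomorphisms (the sheaf analogue of the lifting arguments behind \cite[Theorem 3.22]{Ye2}, in the spirit of Theorems \ref{thm:140}, \ref{thm:141} and \ref{thm:215}): transporting a homotopy to a resolved source is a lifting problem, not a base change. Your instinct to invoke Theorem \ref{thm:245} as an ``auxiliary tool'' points the right way, but it must carry the entire weight of pre-composition and transitivity, replacing your ill-formed tensor constructions rather than supplementing them.
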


In order to have a visible distinction from the Quillen model category 
notation, below we choose notation that resembles the Grothendieck notation in 
\cite{RD}. 

\begin{dfn} \label{dfn:234}
Let $\AA \in \catt{DGR}^{\leq 0}_{\mrm{sc}} / \K_X$.
The {\em homotopy category} of $\catt{DGR}^{\leq 0}_{\mrm{sc}} / \AA$
is its quotient category modulo the relative quasi-homotopy congruence, and we 
denote it by $\cat{K}(\catt{DGR}^{\leq 0}_{\mrm{sc}} / \AA)$.
\end{dfn}

Thus for any pair of objects $\BB, \CC$ we have 
\[ \opn{Hom}_{\cat{K}(\catt{DGR}^{\leq 0}_{\mrm{sc}} / \AA)}(\BB, \CC) = 
\frac{\opn{Hom}_{\catt{DGR}^{\leq 0}_{\mrm{sc}} / \AA}(\BB, \CC)}
{\tup{relative quasi-homotopy}} . \]
There is a functor 
\[ \opn{P} :  \catt{DGR}^{\leq 0}_{\mrm{sc}} / \AA \to 
\cat{K}(\catt{DGR}^{\leq 0}_{\mrm{sc}} / \AA) \]
that is the identity on objects and surjective on morphisms. 

Within $\cat{K}(\catt{DGR}^{\leq 0}_{\mrm{sc}} / \AA)$
we have the set of quasi-isomorphisms, and they form a multiplicatively closed 
set of morphisms. 

\begin{thm} \label{thm:233}
The quasi-isomorphisms in 
$\cat{K}(\catt{DGR}^{\leq 0}_{\mrm{sc}} / \AA)$
satisfy the right Ore condition and the right cancellation condition. 
\end{thm}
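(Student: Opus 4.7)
The plan is to verify the right cancellation and right Ore conditions separately. Right cancellation follows almost directly from Theorem \ref{thm:245}; right Ore requires combining Theorem \ref{thm:215} with a pseudo-semi-free lifting lemma whose proof mirrors the proof of Theorem \ref{thm:140}.

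For right cancellation, suppose $\bar\alpha_0, \bar\alpha_1 : \BB \to \BB'$ and a quasi-isomorphism $\bar\sigma : \BB' \to \BB''$ in $\cat{K}(\catt{DGR}^{\leq 0}_{\mrm{sc}}/\AA)$ satisfy $\bar\sigma \bar\alpha_0 = \bar\sigma \bar\alpha_1$. Pick strict representatives $\alpha_i, \sigma$ in $\catt{DGR}^{\leq 0}_{\mrm{sc}}/\AA$; by Definition \ref{dfn:233} there is a quasi-iso $\psi : \til\BB \to \BB$ such that $\sigma \alpha_0 \psi$ and $\sigma \alpha_1 \psi$ are honestly homotopic relative to $\AA$. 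Apply Theorem \ref{thm:245} with $\til\phi_i := \alpha_i \psi$, $\phi_i := \sigma \til\phi_i$, and $\si := \sigma$: it produces a pseudo-semi-free resolution $\til\psi : \til\BB' \to \til\BB$ for which $\alpha_0 \psi \til\psi$ and $\alpha_1 \psi \til\psi$ are homotopic rel $\AA$. The composite $t := \psi \til\psi$ is then a quasi-iso satisfying $\bar\alpha_0 [t] = \bar\alpha_1 [t]$ in $\cat{K}$.

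For right Ore, pick strict representatives $\alpha : \BB \to \BB'$ and $s : \BB'' \to \BB'$ with $s$ a quasi-iso. The key tool is the following lifting lemma (implicit in the proof of Theorem \ref{thm:140}, though not formally stated): for any pseudo-semi-free DG $\AA$-ring $\til\BB$, any surjective quasi-isomorphism $p : \PP \to \QQ$ in $\catt{DGR}^{\leq 0}_{\mrm{sc}}/\AA$, and any morphism $g : \til\BB \to \QQ$, there exists a lift $\til g : \til\BB \to \PP$ with $p \til g = g$. Granting this, apply Theorem \ref{thm:215} to factor $s = s^+ \eta$ with $\eta : \BB'' \to (\BB'')^+$ a split acyclic pseudo-semi-free quasi-iso (retraction $\ep$), and $s^+ : (\BB'')^+ \to \BB'$ a surjective quasi-iso. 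Pick a pseudo-semi-free resolution $\chi : \til\BB \to \BB$ via Theorem \ref{thm:140}, and apply the lifting lemma to $\alpha \chi$ along $s^+$ to obtain $\til f : \til\BB \to (\BB'')^+$ with $s^+ \til f = \alpha \chi$. Setting $\bar t := [\chi]$ (a quasi-iso) and $\bar\beta := [\ep \til f]$ gives the Ore data, modulo verifying $\bar s \bar\beta = \bar\alpha \bar t$ in $\cat{K}$; this last identity reduces to a relative quasi-homotopy $\eta \ep \sim \opn{id}_{(\BB'')^+}$ built from the contracting homotopy on the split contractible tensor factor $\CC$ in $(\BB'')^+ = \BB'' \ot_{\K_X} \CC$ appearing in the proof of Theorem \ref{thm:215}.

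The main obstacle is the lifting lemma itself, which I expect to prove by mimicking the geometric induction used for Theorem \ref{thm:140}. Write $\til\BB = \AA \ot_{\K_X} \K_X[I]$ and work stage by stage along the filtration $F_q(I)$; at each stage, for every newly adjoined pseudo-generator $t_i \in \Ga(U_i, \til\BB^{n_i})$, use surjectivity of $p$ to choose a local preliminary lift $b_i \in \Ga(U_i', \PP^{n_i})$ of $g(t_i)$. The differential discrepancy $\d b_i - \til g(\d t_i)$ is then a local cocycle in $\ker(p)$, and by acyclicity of $\ker(p)$ it is a local coboundary; subtracting a suitable local primitive from $b_i$ produces a chain map. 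Possibly shrinking $U_i$ to $U_i'$ is harmless, as it only refines the generator specification. Executing this sheaf-theoretic induction, exactly as in the proof of Theorem \ref{thm:140}, will be the most delicate part of the argument.
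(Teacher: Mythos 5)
Your right-cancellation argument is correct: Theorem \ref{thm:245} applies verbatim with $\til{\phi}_i := \alpha_i \circ \psi$, $\si := \sigma$, and since $\til{\psi}$ is (in particular) a surjective quasi-isomorphism, $t := \psi \circ \til{\psi}$ is a quasi-isomorphism with $\alpha_0 t$ and $\alpha_1 t$ honestly homotopic relative to $\AA$, hence equal in $\cat{K}(\catt{DGR}^{\leq 0}_{\mrm{sc}} / \AA)$. This is plainly what Theorem \ref{thm:245} is in the paper for. (The paper states Theorem \ref{thm:233} without proof, so the comparison is with what its other results permit, not with a written argument.)

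The Ore half, however, rests on a lifting lemma that is false in this setting, and the paper itself signals this. A strict lifting property of pseudo-semi-free objects against all surjective quasi-isomorphisms is exactly the cofibrancy that the Introduction and Remark \ref{rem:180} say pseudo-semi-free objects do \emph{not} have, because of the finiteness built into open sets. The failure occurs at the step you dismiss as harmless: surjectivity of $p : \PP \to \QQ$ is a stalkwise condition, so the section $g(t_i) \in \Ga(U_i, \QQ^{n_i})$ need not lift to $\Ga(U_i, \PP^{n_i})$ at all --- the obstruction is a sheaf-cohomology class on $U_i$ --- and likewise the discrepancy cocycle in $\ker(p)$ is a coboundary only locally, since acyclicity of the sheaf $\ker(p)$ yields primitives only on a cover. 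Shrinking the $U_i$ does not merely ``refine the generator specification''; it changes the source object. What your induction actually produces is a refined pseudo-semi-free object $\til{\BB}_1$, a quasi-isomorphism $\rho : \til{\BB}_1 \to \til{\BB}$, and a morphism $\til{g} : \til{\BB}_1 \to \PP$ with $p \circ \til{g} = g \circ \rho$: a lift of $g \rho$, not of $g$. For the lemma as stated this is fatal; for the Ore condition it is repairable, precisely because one is free to enlarge the denominator: take $t := [\chi \circ \rho]$ and $\beta := [\ep \circ \til{g}]$, so the correct tool is a ``lifting after refinement'' statement in the spirit of Theorem \ref{thm:141} and Remark \ref{rem:180}, not cofibrancy. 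With that repair your scheme goes through, modulo the second debt you acknowledge: the relative homotopy $\eta \circ \ep \sim \opn{id}$ on $(\BB'')^+ = \BB'' \ot_{\K_X} \CC$, which is plausible from the contractibility of $\CC$ asserted in the proof of Theorem \ref{thm:215}, but still has to be exhibited as an actual diagram as in Definition \ref{dfn:232} rather than asserted.
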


\begin{dfn} \label{dfn:235}
Let $\AA \in \catt{DGR}^{\leq 0}_{\mrm{sc}} / \K_X$.
The {\em derived category} of $\catt{DGR}^{\leq 0}_{\mrm{sc}} / \AA$
is its localization with respect to the quasi-isomorphisms. We denote it by 
$\cat{D}(\catt{DGR}^{\leq 0}_{\mrm{sc}} / \AA)$.
\end{dfn}

By definition there is a  functor 
\[ \opn{Q} :  \catt{DGR}^{\leq 0}_{\mrm{sc}} / \AA \to 
\cat{D}(\catt{DGR}^{\leq 0}_{\mrm{sc}} / \AA) \]
that is the identity on objects. Since relatively quasi-homotopic morphisms in 
$\catt{DGR}^{\leq 0}_{\mrm{sc}} / \AA$
become equal in 
$\cat{D}(\catt{DGR}^{\leq 0}_{\mrm{sc}} / \AA)$,
we get a commutative diagram of functors
\[ \UseTips \xymatrix @C=6ex @R=6ex {
\catt{DGR}^{\leq 0}_{\mrm{sc}} / \AA
\ar[d]_{\opn{P}}
\ar[dr]^{\opn{Q}}
\\
\cat{K}(\catt{DGR}^{\leq 0}_{\mrm{sc}} / \AA)
\ar[r]^{\bar{\opn{Q}}} 
&
\cat{D}(\catt{DGR}^{\leq 0}_{\mrm{sc}} / \AA)
} \]

\begin{cor} \label{cor:231}
The functor 
\[ \bar{\opn{Q}} : \cat{K}(\catt{DGR}^{\leq 0}_{\mrm{sc}} / \AA) \to 
\cat{D}(\catt{DGR}^{\leq 0}_{\mrm{sc}} / \AA) \]
is a right Ore localization, and it is also faithful. 
\end{cor}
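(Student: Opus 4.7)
The plan is to deduce this purely formally from Theorem \ref{thm:233}, which provides the two standard ingredients needed for a right Ore localization with faithful localization functor. First I would observe that $\bar{\opn{Q}}$ is (canonically isomorphic to) the localization of the category $\cat{K}(\catt{DGR}^{\leq 0}_{\mrm{sc}} / \AA)$ at the class $S$ of quasi-isomorphisms. Indeed, the universal property of $\opn{Q}$ as a localization, together with the fact that $\opn{P}$ sends quasi-isomorphisms to quasi-isomorphisms and that relatively quasi-homotopic morphisms are identified by $\opn{Q}$, furnishes $\bar{\opn{Q}}$ and identifies it with the corresponding localization of $\cat{K}(\catt{DGR}^{\leq 0}_{\mrm{sc}} / \AA)$ at $S$.

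Next I would invoke the standard Gabriel--Zisman construction. Since by Theorem \ref{thm:233} the class $S$ satisfies the right Ore condition in $\cat{K}(\catt{DGR}^{\leq 0}_{\mrm{sc}} / \AA)$, the morphisms in the localization $\cat{D}(\catt{DGR}^{\leq 0}_{\mrm{sc}} / \AA)$ from $\BB$ to $\CC$ can be presented as equivalence classes of right fractions $\phi \circ \psi^{-1}$, where $\psi : \BB' \to \BB$ is in $S$ and $\phi : \BB' \to \CC$ is arbitrary. This is exactly what it means for $\bar{\opn{Q}}$ to be a right Ore localization, so the first assertion follows.

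For faithfulness, suppose $\phi_0, \phi_1 : \BB \to \CC$ are two morphisms in $\cat{K}(\catt{DGR}^{\leq 0}_{\mrm{sc}} / \AA)$ with $\bar{\opn{Q}}(\phi_0) = \bar{\opn{Q}}(\phi_1)$. By the right-fraction description of equality in the Ore localization, there exists $\psi : \BB' \to \BB$ in $S$ with $\phi_0 \circ \psi = \phi_1 \circ \psi$ in $\cat{K}(\catt{DGR}^{\leq 0}_{\mrm{sc}} / \AA)$. Applying the right cancellation condition from Theorem \ref{thm:233} to this equality yields $\phi_0 = \phi_1$ in $\cat{K}(\catt{DGR}^{\leq 0}_{\mrm{sc}} / \AA)$, so $\bar{\opn{Q}}$ is faithful.

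The argument is essentially formal, so the only real obstacle is bookkeeping: one must be confident that $\bar{\opn{Q}}$ genuinely coincides with the localization of $\cat{K}(\catt{DGR}^{\leq 0}_{\mrm{sc}} / \AA)$ at $S$, and that the right-fraction characterization of equality of morphisms in the Ore localization applies in the exact form needed. Both points are standard, so modulo citing Theorem \ref{thm:233} the corollary requires no further input from the geometry of pseudo-semi-free resolutions.
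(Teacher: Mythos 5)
Your proposal is correct and takes essentially the same route the paper intends: Corollary \ref{cor:231} is offered without proof precisely because it is the formal Gabriel--Zisman consequence of Theorem \ref{thm:233} that you spell out, including the identification of $\bar{\opn{Q}}$ with the localization of $\cat{K}(\catt{DGR}^{\leq 0}_{\mrm{sc}} / \AA)$ at the quasi-isomorphisms. One small point: your faithfulness step reads the right cancellation condition in its strong form ($\phi_0 \circ \psi = \phi_1 \circ \psi$ with $\psi$ a quasi-isomorphism forces $\phi_0 = \phi_1$ in $\cat{K}$), which is the correct reading here because the relative quasi-homotopy congruence of Definition \ref{dfn:233} already absorbs precomposition by quasi-isomorphisms, so equalization by some $\psi$ in the homotopy category upgrades to genuine equality of classes.
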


This tells us that any morphism in 
$\cat{D}(\catt{DGR}^{\leq 0}_{\mrm{sc}} / \AA)$ can be expressed as a simple 
right fraction:
\[ \opn{Q}(\phi) \circ \opn{Q}(\psi)^{-1}  \]
where $\phi, \psi$ are morphisms in $\catt{DGR}^{\leq 0}_{\mrm{sc}} / \AA$, 
and $\psi$ is a quasi-isomorphism. Moreover, there is equality  
\[ \opn{Q}(\phi_1) = \opn{Q}(\phi_2) \]
in $\cat{D}(\catt{DGR}^{\leq 0}_{\mrm{sc}} / \AA)$ iff  
$\phi_1$ and $\phi_2$ are relatively quasi-homotopic in
$\catt{DGR}^{\leq 0}_{\mrm{sc}} / \AA$.

We do not wish to perform a detailed study of maps between commutative DG 
ringed spaces in this paper. We only note that:

\begin{prop} \label{prop:250}
Let $V \sub X$ be an open set. The restriction functor 
\[ \catt{DGR}^{\leq 0}_{\mrm{sc}} / \AA \to 
\catt{DGR}^{\leq 0}_{\mrm{sc}} / \AA|_V , \quad  
\BB \mapsto \BB|_V \]
induces functors 
\[ \cat{K}(\catt{DGR}^{\leq 0}_{\mrm{sc}} / \AA) \to 
\cat{K}(\catt{DGR}^{\leq 0}_{\mrm{sc}} / \AA|_V) \]
and
\[ \cat{D}(\catt{DGR}^{\leq 0}_{\mrm{sc}} / \AA) \to 
\cat{D}(\catt{DGR}^{\leq 0}_{\mrm{sc}} / \AA|_V) , \]
that commute with the functors $\opn{Q}$,  $\opn{P}$ and $\bar{\opn{Q}}$.
\end{prop}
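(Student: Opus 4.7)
The plan is to verify the three required properties in sequence: that $(-)|_V$ is well-defined on $\catt{DGR}^{\leq 0}_{\mrm{sc}} / \AA$, that it preserves the relative quasi-homotopy congruence, and that it sends quasi-isomorphisms to quasi-isomorphisms. The compatibility with $\opn{P}$, $\opn{Q}$ and $\bar{\opn{Q}}$ will then be automatic from the universal properties used to define them.

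First I would observe that restriction to an open set is the functor $g^{-1}$ for the open immersion $g : V \to X$. Since $g^{-1}$ is exact, strong monoidal (i.e.\ $(\BB \ot_{\AA} \CC)|_V \cong \BB|_V \ot_{\AA|_V} \CC|_V$ canonically), preserves units, and preserves all finite limits and colimits, it takes a commutative DG $\AA$-ring to a commutative DG $\AA|_V$-ring and a morphism of such to a morphism of such. This gives the functor on the level of $\catt{DGR}^{\leq 0}_{\mrm{sc}} / \AA$. Moreover, since stalks of $\BB|_V$ at points $x \in V$ agree with stalks of $\BB$ at $x$, we have $\opn{H}(\BB|_V) \cong \opn{H}(\BB)|_V$; in particular, restriction sends quasi-isomorphisms to quasi-isomorphisms.

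Next I would check that restriction preserves the relative quasi-homotopy relation. Given a homotopy diagram as in Definition \ref{dfn:232} between $\phi_0, \phi_1 : \BB \to \CC$ relative to $\AA$, applying $(-)|_V$ yields the diagram
\[ \UseTips \xymatrix @C=6ex @R=5ex {
\BB|_V
&
\BB|_V \ot_{\AA|_V} \BB|_V
\ar@{->>}[l]_(0.63){\mu|_V}
\ar[r]^(0.63){\phi_0|_V \, \ot \, \phi_1|_V}
\ar[d]_{\eta|_V}
&
\CC|_V
\\
&
\BB^{+}|_V
\ar@{->>}[ul]^{\ep|_V}
\ar[ur]_{\phi|_V}
} \]
in $\catt{DGR}^{\leq 0}_{\mrm{sc}} / \AA|_V$, using the canonical isomorphism for the tensor product, the exactness of $(-)|_V$ (to preserve surjections), and the previous paragraph (to preserve the quasi-isomorphism $\ep$). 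Hence $\phi_0|_V$ and $\phi_1|_V$ are homotopic relative to $\AA|_V$. The same argument, together with preservation of quasi-isomorphisms, shows that relative quasi-homotopy (Definition \ref{dfn:233}) is preserved.

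Finally, by Theorem \ref{thm:232} relative quasi-homotopy is a congruence on both sides, so the previous step together with the universal property of the quotient category yields a unique functor on the homotopy categories making the square with $\opn{P}$ commute. Since restriction preserves quasi-isomorphisms, composing with $\opn{Q}$ on the target and invoking the universal property of the Ore localization (Theorem \ref{thm:233} and Corollary \ref{cor:231}) produces the induced functor on derived categories and automatically makes it commute with $\bar{\opn{Q}}$ and $\opn{Q}$. There is no real obstacle here — the only point requiring any care is the canonical identification $(\BB \ot_{\AA} \BB)|_V \cong \BB|_V \ot_{\AA|_V} \BB|_V$, which is a standard compatibility between $g^{-1}$ and tensor products of sheaves of modules.
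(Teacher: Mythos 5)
The paper states Proposition \ref{prop:250} without proof (it is one of the results the preview leaves to the reader), and your argument is exactly the routine verification the author evidently has in mind: exactness and monoidality of $g^{-1}$ for the open immersion $g : V \to X$, the stalkwise computation of cohomology giving preservation of quasi-isomorphisms and surjections (hence of the homotopy diagram of Definition \ref{dfn:232} and of the relation in Definition \ref{dfn:233}), and then the universal properties of the quotient category and of the localization. Your proof is correct; the only cosmetic point is that for the derived-category functor you need only the plain universal property of localization, not the Ore condition of Theorem \ref{thm:233}, and the commutation with $\bar{\opn{Q}}$ follows formally because $\opn{P}$ is the identity on objects and surjective on morphisms.
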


\section{Left Derived Tensor Products of Sheaves of DG Rings}
\label{sec:der-inters}

As before, $(X, \AA)$ is a  commutative DG ringed space over $\K$. 

\begin{thm} \label{thm:246}
Consider the commutative DG ringed space $(X, \AA)$. 
There is a bifunctor 
\[ (- \ot^{\mrm{L}}_{\AA} -) : 
\cat{D}(\catt{DGR}^{\leq 0}_{\mrm{sc}} / \AA) \times 
\cat{D}(\catt{DGR}^{\leq 0}_{\mrm{sc}} / \AA) \to 
\cat{D}(\catt{DGR}^{\leq 0}_{\mrm{sc}} / \AA) \, , \] 
together with a morphism
\[ \xi : \opn{Q} \circ \, (- \ot_{\AA} -)  \to (- \ot^{\mrm{L}}_{\AA} -)  \]
of bifunctors 
\[ \catt{DGR}^{\leq 0}_{\mrm{sc}} / \AA \, \times \,  
\catt{DGR}^{\leq 0}_{\mrm{sc}} / \AA \to 
\cat{D}(\catt{DGR}^{\leq 0}_{\mrm{sc}} / \AA) \, , \]
with this property\tup{:} if  
$\BB, \CC \in \catt{DGR}^{\leq 0}_{\mrm{sc}} / \AA$
are such that at least one of them is K-flat over $\AA$, then the morphism
\[ \xi_{\BB, \CC} : \BB \ot_{\AA} \CC \to 
\BB \ot^{\mrm{L}}_{\AA}  \CC \]
in 
$\cat{D}(\catt{DGR}^{\leq 0}_{\mrm{sc}} / \AA)$
is an isomorphism. 
\end{thm}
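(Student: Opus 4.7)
The plan is to define $\BB \ot^{\mrm{L}}_\AA \CC$ by resolving the first argument via Theorem \ref{thm:140}. Once and for all, for each $\BB \in \catt{DGR}^{\leq 0}_{\mrm{sc}} / \AA$ I fix a pseudo-semi-free resolution $\pi_\BB : \til{\BB} \to \BB$, and I declare $\BB \ot^{\mrm{L}}_\AA \CC := \til{\BB} \ot_\AA \CC$, viewed as an object of $\cat{D}(\catt{DGR}^{\leq 0}_{\mrm{sc}} / \AA)$; note that this is indeed a commutative DG $\AA$-ring. The morphism $\xi_{\BB, \CC}$ is then the inverse in $\cat{D}$ of $\opn{Q}(\pi_\BB \ot \opn{id}_\CC) : \BB \ot^{\mrm{L}}_\AA \CC \to \BB \ot_\AA \CC$, which exists in the derived category because the domain is given by a genuine map in the underlying category.

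To extend to a bifunctor I treat the two variables separately. For a morphism $\phi : \BB \to \BB'$ in $\catt{DGR}^{\leq 0}_{\mrm{sc}} / \AA$, apply Theorem \ref{thm:141} to the quasi-isomorphisms $\pi_{\BB'}$ and $\phi \circ \pi_\BB$ into $\BB'$ to obtain a pseudo-semi-free resolution $\til{\BB}''$ of $\til{\BB}$ and a lift $\til{\phi} : \til{\BB}'' \to \til{\BB}'$ over $\BB'$; since $\til{\BB}'' \to \til{\BB}$ is a quasi-isomorphism, tensoring $\til{\phi}$ with $\CC$ defines a morphism $(\phi \ot^{\mrm{L}}_\AA \opn{id}_\CC)$ in $\cat{D}$. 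For a morphism $\gamma : \CC \to \CC'$ in the derived category, express $\gamma$ as a right fraction $\opn{Q}(\phi) \circ \opn{Q}(\psi)^{-1}$ using Corollary \ref{cor:231}; since $\til{\BB}$ is K-flat by Proposition \ref{prop:180}, both $\til{\BB} \ot_\AA \phi$ and $\til{\BB} \ot_\AA \psi$ are morphisms in $\catt{DGR}^{\leq 0}_{\mrm{sc}} / \AA$, the latter still a quasi-isomorphism, and they form the required right fraction. Naturality of $\xi$ follows by unwinding these definitions.

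For the K-flat claim: if $\CC$ is K-flat over $\AA$, tensoring the quasi-isomorphism $\pi_\BB$ with $\CC$ preserves quasi-isomorphism, so $\pi_\BB \ot \opn{id}_\CC$ is already a quasi-isomorphism and $\xi_{\BB, \CC}$ is an isomorphism in $\cat{D}$. If instead $\BB$ is K-flat, I choose an auxiliary pseudo-semi-free resolution $\til{\CC} \to \CC$ and consider the commutative square obtained by tensoring the pair of quasi-isomorphisms $\til{\BB} \to \BB$ and $\til{\CC} \to \CC$: the arrow $\til{\BB} \ot_\AA \til{\CC} \to \BB \ot_\AA \til{\CC}$ is a quasi-isomorphism because $\til{\CC}$ is K-flat, while both vertical arrows are quasi-isomorphisms because $\til{\BB}$ and $\BB$ are K-flat. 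Two-out-of-three forces $\pi_\BB \ot \opn{id}_\CC$ to be a quasi-isomorphism, so $\xi_{\BB, \CC}$ is again invertible.

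The main obstacle is verifying functoriality on morphisms: two distinct applications of Theorem \ref{thm:141} to lift $\phi : \BB \to \BB'$ must induce the same morphism in $\cat{D}$ after tensoring with $\CC$. By the faithfulness of $\bar{\opn{Q}}$ (Corollary \ref{cor:231}), this reduces to showing that any two such lifts $\til{\phi}_0, \til{\phi}_1$ are relatively quasi-homotopic over $\AA$ and that this relation is preserved under $(- \ot_\AA \CC)$. The first point is exactly the content of Theorem \ref{thm:245} applied to the pair of lifts together with the quasi-isomorphism $\pi_{\BB'}$; the second requires a compatibility check between the cylinder object of Definition \ref{dfn:232} and the functor $(- \ot_\AA \CC)$, which should follow from the fact that tensoring a pseudo-semi-free resolution with $\CC$ yields an object that still witnesses a relative quasi-homotopy, possibly after a further application of Theorem \ref{thm:141} to refine cylinders.
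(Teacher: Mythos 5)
Your first paragraph contains a fatal error: you define $\xi_{\BB, \CC}$ as the inverse in $\cat{D}(\catt{DGR}^{\leq 0}_{\mrm{sc}} / \AA)$ of $\opn{Q}(\pi_\BB \ot \opn{id}_\CC)$, asserting that this inverse exists ``because the domain is given by a genuine map in the underlying category''. That is a non sequitur: $\opn{Q}$ inverts quasi-isomorphisms, and the image of a genuine morphism is invertible only if that morphism is (up to the saturation) a quasi-isomorphism. But $\pi_\BB \ot \opn{id}_\CC : \til{\BB} \ot_\AA \CC \to \BB \ot_\AA \CC$ is \emph{not} a quasi-isomorphism in general --- its failure to be one is exactly the nonvanishing of higher Tor. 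Concretely, take $X$ a point, $\AA = \K[x]$ and $\BB = \CC = \K[x]/(x)$; with $\til{\BB}$ the Koszul DG ring $\K[x][e]$, $\d(e) = x$, $e$ in degree $-1$, one has $\opn{H}^{-1}(\til{\BB} \ot_\AA \CC) \cong \K \neq 0$, while $\BB \ot_\AA \CC = \K$ is concentrated in degree $0$. Note also the internal inconsistency of your proposal: if $\opn{Q}(\pi_\BB \ot \opn{id}_\CC)$ were always invertible, then $\xi$ would always be an isomorphism, the K-flatness hypothesis in the theorem would be vacuous, and $\ot^{\mrm{L}}_\AA$ would simply coincide with $\ot_\AA$; your own third paragraph, which correctly proves invertibility \emph{only under} a K-flatness assumption, contradicts your first. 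The repair is to take $\xi_{\BB, \CC}$ to be the canonical comparison morphism $\opn{Q}(\pi_\BB \ot \opn{id}_\CC)$ itself, not its inverse (this natural map points from the derived to the plain tensor product; only under the K-flat hypothesis can it be reversed, which is what the stated property records --- your two-out-of-three argument with an auxiliary resolution $\til{\CC} \to \CC$ in the case where $\BB$ is K-flat is then exactly right).

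Apart from this, your route differs only mildly from the paper's: the paper resolves \emph{both} factors and sets $\BB \ot^{\mrm{L}}_\AA \CC := \til{\BB} \ot_\AA \til{\CC}$, while you resolve only the first; since $\til{\BB}$ is K-flat (Proposition \ref{prop:180}), the two objects are canonically quasi-isomorphic, so the one-sided definition is legitimate, though the two-sided one makes the symmetry of the bifunctor manifest. Your treatment of functoriality --- lifting a morphism in the first variable via Theorem \ref{thm:141}, checking independence of the lift via Theorem \ref{thm:245} together with the faithfulness of $\bar{\opn{Q}}$ (Corollary \ref{cor:231}), and acting on the second variable through right fractions --- is precisely the machinery the paper's sketch compresses into the phrase ``the results of Section \ref{sec:quasi-hom}''. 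The compatibility you leave open (that $- \ot_\AA \CC$ carries relative quasi-homotopies to relative quasi-homotopies, i.e.\ interacts correctly with the cylinder of Definition \ref{dfn:232}) is likewise left implicit by the paper, so at the sketch level that part is acceptable; but do record that when you tensor the quasi-isomorphism $\til{\BB}'' \to \til{\BB}$ with $\CC$ you are using that both its source and target are K-flat, which is what makes the tensored map a quasi-isomorphism.
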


\begin{proof}[Sketch of Proof]
Given $\BB, \CC \in \catt{DGR}^{\leq 0}_{\mrm{sc}} / \AA$
we choose pseudo-semi-free resolutions 
$\til{\BB} \to \BB$ and $\til{\CC} \to \CC$
in $\catt{DGR}^{\leq 0}_{\mrm{sc}} / \AA$, and define 
\[ \BB \ot^{\mrm{L}}_{\AA} \CC := 
\til{\BB} \ot_{\AA} \til{\CC} . \]
The results of Section \ref{sec:quasi-hom} shows that this is a derived 
functor. 
\end{proof}

The derived tensor product respects localizations, as in Proposition 
\ref{prop:250}.

\section{Resolutions in Algebraic Geometry} \label{sec:alg-geom}

In this section $(X, \OO_X)$ is a scheme (over the base ring $\K$). 

Let $\AA$ be a quasi-coherent commutative DG $\OO_X$-ring; by this we mean 
that each $\OO_X$-module $\AA^p$ is quasi-coherent. 
Let $V \subseteq X$ be an affine open set, and write 
$C := \Ga(V, \OO_X)$ and $A := \Ga(V, \AA)$.  
So $A$ is a commutative DG $C$-ring. 
We can build a commutative semi-free DG $C$-ring resolution
$g : \til{A} \to A$. Each $\til{A}^p$ is a $C$-module, and we can 
sheafify it to get a quasi-coherent sheaf $\til{\AA}^p$ on $V$. 
In this way we obtain a {\em commutative semi-free DG $\OO_V$-ring resolution} 
$g : \til{\AA} \to \AA|_V$.

\begin{thm} \label{thm:142}
Let $(X, \OO_X)$ be a scheme, let $\AA$ be a quasi-coherent 
commutative DG $\OO_X$-ring, and let let $V \subseteq X$ be an affine open set.
Suppose we are given a commutative pseudo-semi-free DG $\OO_X$-ring 
resolution $g : \til{\AA} \to \AA$ on all of $X$, 
and also a commutative semi-free DG $\OO_V$-ring resolution 
$g' : \til{\AA}' \to \AA|_V$ on $V$. 
Then there exists a commutative pseudo-semi-free DG $\OO_V$-ring resolution 
$g'' : \til{\AA}'' \to \AA|_V$ on $V$, and DG $\OO_V$-ring quasi-isomorphisms 
$f : \til{\AA}'' \to \til{\AA}|_V$ and
$f' : \til{\AA}'' \to \til{\AA}'$, such that 
$g|_V \circ f =  g' \circ f' = g''$. 
\end{thm}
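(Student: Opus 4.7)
The plan is to combine Theorem~\ref{thm:141} with the factorization provided by Theorem~\ref{thm:215}. First, on $V$ both candidate resolutions of $\AA|_V$ are pseudo-semi-free objects of $\catt{DGR}^{\leq 0}_{\mrm{sc}} / \OO_V$: if $\til{\AA}^{\natural} \cong \AA^{\natural} \ot_{\K_X} \K_X[I]$, then restricting each pseudo-support $U_i$ to $U_i \cap V$ (and discarding indices with $U_i \cap V = \emptyset$) exhibits $\til{\AA}|_V$ as pseudo-semi-free over $\AA|_V$; and a quasi-coherent sheafification of a semi-free DG $C$-ring is pseudo-semi-free with every pseudo-support equal to $V$ itself.

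Step~1 (common cover): I apply Theorem~\ref{thm:141} in $\catt{DGR}^{\leq 0}_{\mrm{sc}} / \OO_V$ to the two quasi-isomorphisms $g|_V$ and $g'$ landing in $\AA|_V$. This produces a pseudo-semi-free DG $\OO_V$-ring $\til{\AA}''_0$ together with quasi-isomorphisms $f_0 : \til{\AA}''_0 \to \til{\AA}|_V$ and $f'_0 : \til{\AA}''_0 \to \til{\AA}'$ satisfying $g|_V \circ f_0 = g' \circ f'_0 =: g''_0$.

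Step~2 (surjectivity and extension): The composition $g''_0$ is a quasi-isomorphism but need not be surjective, so by Theorem~\ref{thm:215} factor it as $g''_0 = g'' \circ \eta$, where $\eta : \til{\AA}''_0 \to \til{\AA}''$ is a split acyclic pseudo-semi-free homomorphism and $g'' : \til{\AA}'' \to \AA|_V$ is a surjective quasi-isomorphism. According to the sketch of that theorem, we may take $\til{\AA}'' = \til{\AA}''_0 \ot_{\OO_V} \CC$ for a split contractible pseudo-semi-free DG $\OO_V$-ring $\CC$ equipped with a homomorphism $\CC \to \AA|_V$. To obtain $f$ and $f'$, I lift $\CC \to \AA|_V$ along each of the surjective quasi-isomorphisms $g|_V$ and $g'$ separately, proceeding inductively along the pseudo-semi-free filtration of $\CC$: surjectivity supplies lifts of each new pseudo-generator's target on its pseudo-support, and the split contractibility of $\CC$ kills the successive obstructions to lifting the differential. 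Tensoring these lifts against $f_0$ and $f'_0$ produces DG ring homomorphisms $f : \til{\AA}'' \to \til{\AA}|_V$ and $f' : \til{\AA}'' \to \til{\AA}'$ with $g|_V \circ f = g' \circ f' = g''$; two-out-of-three, applied to $f \circ \eta = f_0$ and its analogue for $f'$, forces $f$ and $f'$ to be quasi-isomorphisms.

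The main obstacle is the simultaneous lifting in Step~2: the pseudo-supports of the pseudo-generators of $\CC$ must be chosen finely enough that the sheaf-theoretic surjectivity of $g|_V$ and of $g'$ yields sectionwise surjectivity on each pseudo-support. This is precisely the ``tailoring'' concern noted in the sketch of Theorem~\ref{thm:141}, and the obstruction-theoretic lift itself is of the same flavor as the inductive construction underlying Theorem~\ref{thm:140}, here executed in parallel for the two targets $\til{\AA}|_V$ and $\til{\AA}'$.
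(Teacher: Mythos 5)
Your Step~1 is, essentially verbatim, the paper's entire proof: the paper observes that the quasi-coherent semi-free resolution $g' : \til{\AA}' \to \AA|_V$ is a special case of a pseudo-semi-free resolution and then simply applies Theorem~\ref{thm:141} to $g|_V$ and $g'$ in $\catt{DGR}^{\leq 0}_{\mrm{sc}} / \OO_V$, taking $\til{\AA}''$, $f$, $f'$ and $g'' := g|_V \circ f$ to be its output and stopping there. (Your preliminary check that $\til{\AA}|_V$ is pseudo-semi-free over $\OO_V$ by restricting pseudo-supports is fine but not needed: Theorem~\ref{thm:141} imposes no hypothesis on the sources $\til{\BB}_i$ beyond the quasi-isomorphisms $\phi_i$.)

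Your Step~2 is a genuine addition, and it targets a real point the paper passes over in silence: by Definition~\ref{dfn:181} a resolution is a \emph{surjective} quasi-isomorphism, and Theorem~\ref{thm:141} as stated does not assert that the composite $g''_0 = g|_V \circ f_0$ is surjective; presumably the author intends the construction behind Theorem~\ref{thm:141} to deliver this, but it is not claimed. Your repair via Theorem~\ref{thm:215}, together with the two-out-of-three argument through $f \circ \eta = f_0$, is the natural fix and is sound in outline, but note two caveats. First, lifting $\CC \to \AA|_V$ separately through $g|_V$ and $g'$ invokes a lifting property of split contractible pseudo-semi-free DG rings against surjective quasi-isomorphisms that the paper never states as a result; and, as you yourself flag, sheaf-level surjectivity of $g|_V$ and $g'$ yields surjectivity on sections only after refining the pseudo-supports of the generators of $\CC$ --- yet $\CC$ is handed to you by Theorem~\ref{thm:215} tailored to $g''_0$, not to $g|_V$ and $g'$, so you must rerun that construction with a refined generator specification and observe that surjectivity of $g''$ survives refinement (it does, being stalk-local, but this needs saying). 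Second, a small slip: in the sketch of Theorem~\ref{thm:215} the extension is $\AA^+ = \AA \ot_{\K_X} \CC$, a tensor over the constant sheaf, not over $\OO_V$. In sum, your Step~1 reproduces the paper's argument, and your Step~2 buys the surjectivity of $g''$ that the paper's one-line proof does not address, at the cost of an extra lifting argument that, like much of this preview paper, remains at the level of a sketch.
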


\begin{proof}
The commutative semi-free DG $\OO_V$-ring resolution 
$g'' : \til{\AA}'' \to \AA|_V$ is just a special case of a 
commutative pseudo-semi-free DG $\OO_V$-ring resolution; 
so we can apply Theorem \ref{thm:141} to it and to 
$g|_V : \til{\AA}|_V \to \AA|_V$.
\end{proof}

What Theorem \ref{thm:142} says is that locally our 
commutative pseudo-semi-free resolutions are the same as the quasi-coherent 
resolutions that were considered in \cite{CK}. 

In the next corollary we identify a sheaf on a closed subset $Y \sub X$ 
with its pushforward to $X$. 

\begin{cor} \label{cor:265}
Let $(Y_1, \OO_{Y_1})$ and $(Y_2, \OO_{Y_2})$ be closed subschemes of 
$(X, \OO_{X})$. There is a commutative DG ringed space
$(Y, \OO_{Y})$, such that 
\[ Y = Y_1 \cap Y_2 \sub X \]
as topological spaces, and 
\[ \OO_{Y} = \OO_{Y_1} \ot^{\mrm{L}}_{\OO_{X}} \OO_{Y_2} \]
in $\cat{D}(\catt{DGR}^{\leq 0}_{\mrm{sc}} / \OO_X)$. 
Thus 
\[ (Y, \OO_{Y}) = 
(Y_1, \OO_{Y_1}) \times^{\mrm{R}}_{(X, \OO_{X})} (Y_2, \OO_{Y_2}) , \]
the derived intersection of these subschemes.
\end{cor}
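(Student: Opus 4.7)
The plan is to construct $(Y, \OO_Y)$ directly by applying Theorem~\ref{thm:140} and Theorem~\ref{thm:246}, and then to verify support. First, view $\OO_{Y_i}$ as an object of $\catt{DGR}^{\leq 0}_{\mrm{sc}} / \OO_X$ via the closed embedding $Y_i \hookrightarrow X$ (so it is a sheaf concentrated in degree $0$ whose stalk at $x$ is $\OO_{X,x}/\II_{Y_i,x}$). Apply Theorem~\ref{thm:140} twice to obtain commutative pseudo-semi-free DG ring resolutions $\til{\AA}_i \to \OO_{Y_i}$ in $\catt{DGR}^{\leq 0}_{\mrm{sc}} / \OO_X$, and set
\[ \CC := \til{\AA}_1 \ot_{\OO_X} \til{\AA}_2 \in \catt{DGR}^{\leq 0}_{\mrm{sc}} / \OO_X . \]
Then define the topological space $Y := Y_1 \cap Y_2 \sub X$ and the DG ring $\OO_Y := \CC|_Y$, so that $(Y, \OO_Y)$ is a commutative DG ringed space in the sense of Definition~\ref{dfn:250}.

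The derived tensor product identification is immediate from the machinery already in place: by Proposition~\ref{prop:180} each $\til{\AA}_i$ is K-flat over $\OO_X$, so Theorem~\ref{thm:246} gives
\[ \CC \;=\; \til{\AA}_1 \ot_{\OO_X} \til{\AA}_2 \;\cong\; \til{\AA}_1 \ot^{\mrm{L}}_{\OO_X} \til{\AA}_2 \;\cong\; \OO_{Y_1} \ot^{\mrm{L}}_{\OO_X} \OO_{Y_2} \]
in $\cat{D}(\catt{DGR}^{\leq 0}_{\mrm{sc}} / \OO_X)$, where the last isomorphism uses that $\til{\AA}_i \to \OO_{Y_i}$ is a quasi-isomorphism and that the derived tensor product is well-defined as a bifunctor on the derived category.

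It remains to identify $\CC$ with $\OO_Y$ in the derived category, i.e.\ with the extension by zero of $\CC|_Y$ from $Y$ to $X$. I would do this stalkwise. For $x \notin Y$, we may assume $x \notin Y_1$, so $(\OO_{Y_1})_x = 0$. Since $\til{\AA}_{2,x}$ is K-flat over $\OO_{X,x}$ (Proposition~\ref{prop:180} passed to stalks) and $\til{\AA}_{1,x} \to \OO_{Y_1,x} = 0$ is a quasi-isomorphism, the complex $\CC_x = \til{\AA}_{1,x} \ot_{\OO_{X,x}} \til{\AA}_{2,x}$ is acyclic. Consequently $\opn{H}(\CC)$ is supported on $Y$, so the canonical DG ring homomorphism from $\CC$ to (the extension by zero of) $\CC|_Y$ is a quasi-isomorphism, and Definition~\ref{dfn:235} together with Corollary~\ref{cor:231} make it an isomorphism in $\cat{D}(\catt{DGR}^{\leq 0}_{\mrm{sc}} / \OO_X)$.

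The main obstacle is the last paragraph: one needs to formulate ``extension by zero of a DG ring from a closed subspace'' as a morphism inside the category $\catt{DGR}^{\leq 0}_{\mrm{sc}} / \OO_X$ rather than only as a map of sheaves of abelian groups, and to check that the restriction–pushforward adjunction respects the commutative DG structure and is compatible with the right Ore localization of Theorem~\ref{thm:233}. Once that bookkeeping is done, the isomorphism in the derived category follows from the support calculation and Proposition~\ref{prop:250}, and the last clause of the corollary is a tautology from Theorem~\ref{thm:246}.
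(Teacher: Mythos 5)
Your proposal is correct and is essentially the paper's own proof: the paper likewise chooses pseudo-semi-free resolutions $\AA_i \to \OO_{Y_i}$ via Theorem \ref{thm:140}, sets $\OO_Y := (\AA_1 \ot_{\OO_X} \AA_2)|_Y$, notes that a stalk calculation (exactly your K-flatness argument at $x \notin Y$) shows the canonical homomorphism $\AA_1 \ot_{\OO_X} \AA_2 \to \OO_Y$ is a quasi-isomorphism, and gets the derived tensor identification from Proposition \ref{prop:180} and Theorem \ref{thm:246}. The ``main obstacle'' you flag in your last paragraph is dispatched in the paper by the standing convention stated just before the corollary --- a sheaf on a closed subset is identified with its pushforward to $X$ --- and since the adjunction unit $\CC \to \OO_Y$ for a closed inclusion is automatically a homomorphism in $\catt{DGR}^{\leq 0}_{\mrm{sc}} / \OO_X$, no further bookkeeping with the Ore localization is needed.
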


\begin{proof}
For $i = 1, 2$ we choose pseudo-semi-free resolutions 
$\AA_i \to \OO_{Y_i}$ in \lb 
 $\catt{DGR}^{\leq 0}_{\mrm{sc}} / \OO_X$. 
Let 
$Y := Y_1 \cap Y_2$, and define
\[ \OO_Y := (\AA_1 \ot_{\OO_X} \AA_2)|_Y , \]
the restriction of the DG $\OO_X$-ring $\AA_1 \ot_{\OO_X} \AA_2$
to the closed subset $Y$. A calculation in stalks shows that the canonical DG 
ring homomorphism
\[ \AA_1 \ot_{\OO_X} \AA_2 \to \OO_Y \]
is a quasi-isomorphism. 
\end{proof}

\begin{rem} \label{rem:255}
Here is a speculation regarding the {\em cotangent complex} of the scheme $X$. 
For this we view $\OO_X$ as living in 
$\catt{DGR}^{\leq 0}_{\mrm{sc}} / \K_X$,
where $\K$ is the base ring. Let $\AA \to \OO_X$ be a 
commutative pseudo-semi-free resolution in 
$\catt{DGR}^{\leq 0}_{\mrm{sc}} / \K_X$. 
There is a DG $\AA$-module $\Om^1_{\AA / \K}$, defined as the sheafification of 
the presheaf
\[ V \mapsto \Om^1_{\Ga(V, \AA) / \K} . \]
Let 
\[ \opn{L}_X := \OO_X \ot_{\AA} \Om^1_{\AA / \K} \in \cat{D}(\OO_X) \]
We believe that $\opn{L}_X$ is canonically isomorphic (in the derived category 
$\cat{D}(\OO_X)$) to the cotangent complex as constructed in  \cite{Il}. 
\end{rem}

\begin{rem} \label{rem:260}
If the scheme $X$ is noetherian, and if $\AA$ is a coherent commutative 
$\OO_X$-ring (e.g.\ $\AA = \OO_Y$ for a closed subscheme $Y \sub X$), then 
it is possible to find a commutative pseudo-semi-free resolution
$\til{\AA} \to \AA$
in $\catt{DGR}^{\leq 0}_{\mrm{sc}} / \OO_X$ such that 
$\til{\AA}^{\natural} \cong \OO_X \ot_{\K_X} \K_X[I]$, 
and the indexing set $I$ is finite in each degree. 
This is by the results of \cite[Section II.7]{RD}. 
\end{rem}


\end{document}